\documentclass[a4paper,reqno]{amsart}

\usepackage{mlmodern}
\usepackage[T1]{fontenc}
\usepackage[utf8]{inputenc}
\usepackage[UKenglish]{babel}

\DeclareFontFamily{OMX}{mlmex}{}
\DeclareFontShape{OMX}{mlmex}{m}{n}{<->mlmex10}{}

\usepackage[babel]{microtype}

\usepackage{mathtools}
\usepackage{amssymb}
\usepackage[psdextra]{hyperref}

\usepackage{tikz}
\tikzset{
    every picture/.style = {
        line width = 1pt,
        line cap   = round,
    },
    every plot/.style = {
        smooth,
        samples = 100,
    },
}

\theoremstyle{plain}
\newtheorem{theorem}             {Theorem}    [section]
\newtheorem{corollary}  [theorem]{Corollary}
\newtheorem{lemma}      [theorem]{Lemma}
\newtheorem{proposition}[theorem]{Proposition}

\theoremstyle{definition}
\newtheorem{definition} [theorem]{Definition}
\newtheorem{remark}     [theorem]{Remark}

\newcommand\R{\mathbf R}

\newcommand\Z{\mathbf Z}
\newcommand\N{\mathbf N}

\newcommand\T{\mathbf T}
\renewcommand\S{\mathbf S}

\newcommand\x{\mathbf x}

\newcommand\blank{{\mkern2mu\cdot\mkern2mu}}

\ExplSyntaxOn
\newcommand\diffop{\mathop{}\!\mathrm d}
\NewDocumentCommand \diff { m }
{
    \clist_map_inline:nn { #1 } { \diffop ##1 }
}
\ExplSyntaxOff

\newcommand\Hausdorff{\mathcal H}
\newcommand\dH{\diff\Hausdorff}

\newcommand\weakstarto{\stackrel{*}{\rightharpoonup}}
\newcommand\upto\uparrow
\newcommand\downto\downarrow

\newcommand\symdiff{\mathbin\triangle}

\DeclareMathOperator\Per{Per}
\DeclareMathOperator\id{id}
\renewcommand\div{\operatorname{div}}

\DeclareMathOperator*\Gammalim{\Gamma-lim}
\DeclareMathOperator*\argmin{arg\,min}

\DeclarePairedDelimiter\abs\lvert\rvert
\DeclarePairedDelimiter\norm\lVert\rVert

\DeclarePairedDelimiter\mean\langle\rangle
\DeclarePairedDelimiterX\scalar[2]\langle\rangle{ #1 , #2 }

\usepackage{orcidlink}

\newcommand\dash{\unskip\thinspace\textemdash\penalty\exhyphenpenalty\hskip.16667em\relax\ignorespaces} 
\usepackage{graphicx}

\hypersetup{
    pdftitle    = {A capillary problem, its dimension reduction, and its phase-field approximation},
    pdfauthor   = {Patrick Dondl, Lars Pastewka, Luciano Sciaraffia, and Yizhen Wang},
    pdfkeywords = {Capillarity, adhesion, Γ-convergence, dimension reduction, pinning, hysteresis},
    colorlinks  = true,
    linkcolor   = red,
    citecolor   = blue,
}

\usepackage[alphabetic, initials, msc-links]{amsrefs}

\numberwithin{equation}{section}

\begin{document}

\title[Dimension reduction of a capillary problem]{A capillary problem, its dimension reduction, and its phase-field approximation}

\author[Patrick Dondl]{Patrick Dondl$\,^*$ \orcidlink{0000-0003-3035-7230}}
\address{
    Abteilung für Angewandte Mathematik \\
    Albert-Ludwigs-Universität Freiburg \\
    Hermann-Herder-Straße 10 \\
    79104 Freiburg im Breisgau \\
    Germany.
}
\email{patrick.dondl@mathematik.uni-freiburg.de}
\thanks{$^*$Corresponding author.}

\author[Lars Pastewka]{Lars Pastewka \orcidlink{0000-0001-8351-7336}}
\address{
    Institut für Mikrosystemtechnik \\
    Albert-Ludwigs-Universität Freiburg \\
    Georges-Köhler-Allee 103 \\
    79110 Freiburg im Breisgau \\
    Germany.
}
\email{lars.pastewka@imtek.uni-freiburg.de}

\author[Luciano Sciaraffia]{Luciano Sciaraffia \orcidlink{0009-0000-0142-2246}}
\address{
    Abteilung für Angewandte Mathematik \\
    Albert-Ludwigs-Universität Freiburg \\
    Hermann-Herder-Straße 10 \\
    79104 Freiburg im Breisgau \\
    Germany.
}
\email{luciano.sciaraffia@mathematik.uni-freiburg.de}

\author[Yizhen Wang]{Yizhen Wang \orcidlink{0009-0007-7414-2633}}
\address{
    Institut für Mikrosystemtechnik \\
    Albert-Ludwigs-Universität Freiburg \\
    Georges-Köhler-Allee 103 \\
    79110 Freiburg im Breisgau \\
    Germany.
}
\email{yizhen.wang@imtek.uni-freiburg.de}

\subjclass[2020]{Primary 49Q20; Secondary 49S05, 49J45, 76A20, 76B45.}
\keywords{Capillarity, adhesion, $\Gamma$-convergence, dimension reduction, pinning, hysteresis.}

\begin{abstract}
    We study the behaviour of a given volume of liquid confined between two rough solid plates. When the separation between the plates is small relative to the liquid volume, capillary bridges are expected to form, which minimise Gauss's capillary energy locally. We derive a $\Gamma$-expansion for the energy as the plate separation approaches zero, yielding a dimensionally reduced problem in terms of the wetted regions on the plates. At leading order, the energy is determined by the area of the wetted regions, while the next order term is given by their perimeter, weighted by appropriate functions of the relative adhesion coefficients. This provides a framework for a subsequent phase-field approximation, which is employed in numerical simulations to study the evolution of the droplets under the normal movement of the plates. The theory also gives justification to models for capillarity that assume that liquids fill up the rough topography to the Kelvin radius.
\end{abstract}

\maketitle

\setcounter{tocdepth}{1}
\tableofcontents

\section{Introduction}

Water is ubiquitous \dash typically present as vapour in the air that surrounds us. This vapour condenses on surfaces to form liquid droplets. When condensation occurs between two surfaces in close proximity, the resulting liquid bridge can effectively bond the surfaces together \dash a phenomenon known as capillary adhesion. Capillary adhesion plays a critical role across a wide range of natural and technological contexts. In micro- and nanoelectromechanical systems, capillary forces are a primary cause of stiction, often leading to device failure~\cite{vanspengen:2003}. The cohesion of wet granular materials \dash from sand to suspensions \dash is governed by liquid bridges between individual grains~\cites{moller-bonn:2007, koos-willenbacher:2011, pakpour-habibi-moller-ea:2012}. In biology, capillary forces contribute to the adhesion mechanisms of insects and small animals~\cites{huber-mantz-spolenak-ea:2005, autumn:2007}. Even at macroscopic scales, capillary adhesion affects the friction between wet surfaces~\cite{peng-hsia-woutersen-ea:2022}. Because capillary condensation is thermodynamically favoured below saturation, these effects arise spontaneously under ambient conditions and become increasingly pronounced as relative humidity rises~\cite{autumn:2007}.

The classical treatment of capillary adhesion, as presented in standard textbooks~\cite{israelachvili:2011}, centres around single capillary bridges that form between the contacting interfaces. Yet nearly all natural and engineered surfaces are rough~\cite{persson-albohr-tartaglino-ea:2005}. Surface roughness fundamentally alters the geometry of contact: rather than a single, well-defined meniscus, multiple capillary bridges nucleate at discrete locations where the local gap is sufficiently small for condensation to occur~\cite{desouza-brinkmann-mohrdieck-ea:2008}. While the literature acknowledges this complexity, only approximate theories for quantifying how surface roughness affects the effective capillary force between two contacting bodies have been developed to date~\cites{persson:2008,peng-hsia-woutersen-ea:2022}.

Existing models for the effective capillary force between rough surfaces rest on an \emph{ad~hoc} filling-up ansatz: the liquid is assumed to occupy those regions where the local gap between the surfaces falls below a threshold set by the Kelvin length $d_\mathrm{K} = r_\mathrm{K}(\cos\theta_0 + \cos\theta_1)$, with~$r_\mathrm{K}$ the Kelvin radius of a meniscus in thermodynamic equilibrium with the surrounding vapour~\cites{persson:2008, peng-hsia-woutersen-ea:2022}. Geometrically, the wetted region is then a sublevel set of the gap profile, bounded by one of its level sets: points where the interfacial separation falls below~$d_\mathrm{K}$ are flooded. Under this assumption the wetted patches are slit islands~\cite{mandelbrot:1982}, coinciding with the contact patches predicted by bearing-area models~\cite{ramisetti-campana-anciaux-ea:2011}. This ansatz, however, is imposed rather than derived: it fixes the wetted region purely by the gap geometry and leaves no room for the liquid--vapour interfacial energy to act. One aim of the present paper is to show that it is, in fact, the leading-order consequence of energy minimisation, and to identify the correction that the interfacial energy contributes at the next order.

From a mathematical perspective, the analysis of capillary phenomena builds on the theory of sets of finite perimeter and the associated variational framework, as developed in the foundational works of De~Giorgi and later systematised by Giusti~\cite{giusti:1984} and Maggi~\cite{maggi:2012}. The equilibrium configurations of liquid drops are characterised as minimisers of Gauss's free energy functional, which balances the liquid-vapour interfacial energy against wetting contributions along the solid boundary. The mathematical study of such capillary problems, including existence, regularity, and qualitative properties of minimisers, has a rich history surveyed comprehensively by Finn~\cite{finn:1986} \dash see also~\cite{maggi:2012}*{Chapter 19} for an introductory treatment of the subject.

The rigorous passage from a full-dimensional model to a reduced one is naturally formulated in terms of $\Gamma$-convergence, a variational notion of convergence introduced by De~Giorgi and Franzoni~\cite{degiorgi-franzoni:1975} that ensures convergence of minimisers and infima \dash see Dal~Maso~\cite{dalmaso:1993} for a comprehensive treatment. Dimension reduction via $\Gamma$-convergence has proven particularly powerful in the mechanics of thin structures, where it has been employed to derive membrane, plate, and rod theories from three-dimensional elasticity~\cites{acerbi-buttazzo-percivale:1991, ledret-raoult:1995, mora-muller:2003, friesecke-james-muller:2006}. The present work adapts this methodology to capillary problems: as the plate separation tends to zero, the three-dimensional free boundary problem reduces to a two-dimensional model posed on the wetted region. Ordinary $\Gamma$-convergence $F_\varepsilon \to F^0$, however, captures only the leading order: $F^0$ here admits a whole family of minimisers, so it alone neither distinguishes which of them arises as a limit of minimisers of~$F_\varepsilon$, nor resolves the rate at which the minimal values $\min F_\varepsilon$ approach $\min F^0$. To capture this finer information we employ the notion of \emph{$\mathit\Gamma$-development} (or \emph{$\mathit\Gamma$-expansion}), introduced by Anzellotti and Baldo~\cite{anzellotti-baldo:1993} and later applied to the elasticity theory of rods and plates~\cite{anzellotti-baldo-percivale:1994}, which provides a systematic asymptotic development $F_\varepsilon = F^0 + \varepsilon F^1 + o(\varepsilon)$, where $F^1$ is the $\Gamma$-limit of the rescaled functionals $\varepsilon^{-1}(F_\varepsilon - \min F^0)$ restricted to the minimisers of~$F^0$. A companion notion is that of \emph{$\mathit\Gamma$-equivalence} in the sense of Braides and Truskinovsky~\cite{braides-truskinovsky:2008}, which replaces the asymptotic formula $F^0 + \varepsilon F^1$ by a simpler $\varepsilon$-dependent family~$G_\varepsilon$ whose direct minimisation reproduces $F_\varepsilon$ to the prescribed order.

Our results are as follows. At leading order, the $\Gamma$-limit~$F^0$ (Theorem~\ref{thm:Gammalim0}) reduces the three-dimensional capillary problem to an optimal wetting problem on the cross-section, and the characterisation of its minimisers (Proposition~\ref{prop:E0}) shows that the liquid occupies exactly the sublevel set~$\{\overline\sigma < \lambda^* g\}$ of the gap~$g$. This is precisely the filling-up ansatz of the physics literature, now \emph{derived} rather than postulated: the threshold~$\lambda^*$ emerges as the Lagrange multiplier of the volume constraint and plays the role of the chemical potential that the Kelvin radius fixes in~\cites{persson:2008, peng-hsia-woutersen-ea:2022}, and for homogeneous plates the wetted region is a genuine level set of~$g$. At the next order, the $\Gamma$-development yields a weighted-perimeter functional~$F^1$ (Theorem~\ref{thm:Gammalim1}) and a dimensionally reduced family of functionals equivalent to~$F_\varepsilon$, together with an expansion of the minimal energy. This first-order term measures the deviation of the true wetted region from the sharp level set: its weight is largest at the neutral contact angle~$\pi/2$ \dash where the filling-up property disappears entirely \dash and smallest in the perfectly wetting and perfectly drying limits ($\sigma \to \pm 1$, contact angles $0$ and~$\pi$), so that the level-set picture is most accurate near those extremes. Finally, when the limiting interface is irregular the development may break down; we quantify this through the Minkowski dimension~$\alpha$ of the limiting boundary (Theorem~\ref{thm:irreg-mins}), exhibiting fractal interfaces for which the minimiser scales as~$\varepsilon^{d-\alpha}$.

We complement the analysis with numerical experiments based on a phase-field approximation of the reduced functional, in the spirit of Modica and Mortola~\cites{modica-mortola:1977, modica:1987a}, whose $\Gamma$-convergence to the perimeter functional provides a robust framework for tracking the evolving droplet geometry. The simulations confirm the picture above: the computed minimisers closely track the level-set filling predicted by Proposition~\ref{prop:E0}, with the agreement sharpest at the extreme contact angles and the perimeter term visibly smoothing and merging islands as the neutral angle is approached. They also yield the macroscopic capillary force as the plates are separated, which is attractive for hydrophilic interfaces, repulsive for hydrophobic ones, and smaller by some orders of magnitude at the neutral angle, and which exhibits hysteresis over approach--retraction cycles. Beyond illustrating the theory, these computations demonstrate that the reduced model can be solved on grids spanning the several orders of magnitude in length scale required to resolve realistic roughness \dash a regime out of reach for the full three-dimensional free boundary problem \dash and thereby provides the foundation for a quantitative model of capillary adhesion between rough surfaces. The irregular regime of Theorem~\ref{thm:irreg-mins}, where the limiting interface has infinite perimeter, remains an avenue for further work.

The remainder of the article is organised as follows. In Section~\ref{sec:setting} we recall Gauss's capillary model, formulate the problem in the thin domain $\Omega_\varepsilon$ between the two plates, rewrite it on the fixed cylinder $\Omega = \omega \times (0,1)$, and state our main results. Section~\ref{sec:proofs} is devoted to the proofs of these results. In Section~\ref{sec:numerics}, we develop a phase-field approximation of the reduced functional and present numerical experiments illustrating the evolution of the droplets under normal motion of the plates. Finally, for the convenience of the reader, two appendices collect some of the background material used throughout.

\section{Mathematical setting and main results}\label{sec:setting}

In Gauss's capillary model, stable configurations of a liquid drop of fixed volume are determined by minimising a total surface energy. This energy balances the interactions across the liquid-vapour~(lv), liquid-solid~(ls), and solid-vapour~(sv) interfaces:
\begin{equation}\label{eq:explicit-energy}
    \gamma_\mathrm{lv} \Hausdorff^d(\Sigma_\mathrm{lv}) + \int_{\Sigma_\mathrm{ls}} \gamma_\mathrm{ls}(\x) \dH^d(\x) + \int_{\Sigma_\mathrm{sv}} \gamma_\mathrm{sv}(\x) \dH^d(\x) .
\end{equation}
Here, $\Hausdorff^d$ denotes the $d$-dimensional Hausdorff measure (representing the interfacial area). The strictly positive quantities~$\gamma$ are called \emph{surface tensions.} It is a reasonable physical assumption to treat $\gamma_\mathrm{lv}$ as a constant, whereas $\gamma_\mathrm{ls}$ and~$\gamma_\mathrm{sv}$ are allowed to depend on the spatial coordinate~$\x$ to account for the heterogeneity of the fixed solid phase.

We model the solid container as a bounded open set $\Omega \subset \R^{d+1}$ ($d \geq 1$) and the liquid drop as a set of finite perimeter $E \subset \Omega$. The relevant interfaces can then be expressed in terms of the reduced boundary $\partial^*\!E$ and the container boundary $\partial\Omega$:
\[
    \Sigma_\mathrm{lv} = \partial^*\!E \cap \Omega , \quad \Sigma_\mathrm{ls} = \partial^*\!E \cap \partial\Omega , \quad \Sigma_\mathrm{sv} = \partial\Omega \setminus \partial^*\!E .
\]
Observe that the total surface tension over the solid boundary is a fixed constant, independent of the drop's configuration:
\[
    \int_{\Sigma_\mathrm{sv}} \gamma_\mathrm{sv}(\x) \dH^d(\x) + \int_{\Sigma_\mathrm{ls}} \gamma_\mathrm{sv}(\x) \dH^d(\x) = \int_{\partial\Omega} \gamma_\mathrm{sv}(\x) \dH^d(\x) .
\]
By substituting this relation into the initial energy and dividing by the constant~$\gamma_\mathrm{lv}$, the problem is equivalent, after renormalisation, to minimising the functional
\[
    \Per(E,\Omega) + \int_{\partial\Omega} \chi_E(\x) \sigma( \x ) \dH^d( \x )
\]
among all subsets $E \subset \Omega$ with a prescribed volume. Here, $\Per(\blank,\Omega)$ is the relative perimeter functional in $\Omega$, given by
\[
    \Per(E,\Omega) \coloneq \sup_{\substack{\mathbf X \in C_0^\infty(\Omega;\R^{d+1}) \\ \norm{ \mathbf X }_{L^\infty} \leq 1}} \int_E \div \mathbf X \diff\x = \Hausdorff^d(\partial^*\!E \cap \Omega) ,
\]
and $\chi_E$ denotes the trace of the characteristic function of~$E$ on~$\partial\Omega$. Finally, the function
\[
    \sigma(\x) \coloneq \frac{ \gamma_\mathrm{ls}(\x) - \gamma_\mathrm{sv}(\x) }{ \gamma_\mathrm{lv} } , \quad \x \in \partial\Omega ,
\]
is called the \emph{relative adhesion coefficient,} and we assume the \emph{wetting condition} $\norm{ \sigma }_{L^\infty} < 1$.

In our situation the container is given by the region bounded by two rough solid plates, which are order-$\varepsilon$ close to one another, for $\varepsilon > 0$ the scale parameter. We represent these plates as graphs of two functions $h_j \in W^{1,\infty}(\omega)$ ($j=0,1$), where $\omega \subset \R^d$ is an open set in a space of one dimension less. We may also treat the case with periodic boundary conditions, for which we identify $\omega = \T^d \coloneq \R^d / \Z^d$. For simplicity, we suppose that
\[
    \int_\omega h_j(x) \diff x = 0 , \quad j = 0,1 .
\]
The container is then the set
\[
    \Omega_\varepsilon \coloneq \bigl\{ \x = (x,y) \in \omega \times \R : \varepsilon h_0(x) < y < \varepsilon z + \varepsilon h_1(x) \bigr\} ,
\]
where $z > 0$ is the average separation of the plates. We define
\[
    g(x) \coloneq z + h_1(x) - h_0(x) , \quad x \in \omega ,
\]
as the \emph{gap} between the plates, and assume $g > 0$.

With this in mind, we may consider the relative adhesion coefficient of~$\Omega_\varepsilon$ as
\[
    \sigma(x,y) =
    \begin{cases}
        \sigma_0(x) , & \text{if}\quad y = \varepsilon h_0(x) ,                 \\
        \sigma_1(x) , & \text{if}\quad y = \varepsilon z + \varepsilon h_1(x) , \\
        0 ,           & \text{if}\quad x \in \partial\omega ,
    \end{cases}
\]
with $\sigma_j \in L^\infty(\omega)$ continuous functions ($j=0,1$). Note we are assuming that the relative adhesion coefficient is zero on the lateral walls of the container,
\[
    \{ (x,y) \in \partial\omega \times \R : \varepsilon h_0(x) < y < \varepsilon z + \varepsilon h_1(x) \} ,
\]
just to simplify the proofs and computations \dash it is otherwise possible to consider different values, independent of the height parameter~$y$, without changing the essence of the result. Indeed, the walls have height of order~$\varepsilon$, so their $\Hausdorff^d$-measure is~$O(\varepsilon)$. A non-zero adhesion coefficient~$\sigma_{\mathrm{wall}}$ therefore alters $F_\varepsilon$ by at most~$O(\varepsilon)$, leaving the $\Gamma$-limit~$F^0$ and its minimum value~$m_0$ unchanged, and contributing to the next term in the expansion $F^1$ only the fixed boundary term $\int_{\partial^*\!E_0 \cap \partial\omega} g \sigma_{\mathrm{wall}} \dH^{d-1}$, which can be carried through the proofs of Section~\ref{sec:proofs} without modification.

\begin{figure}
    \begin{tikzpicture}[
            font  = \footnotesize,
            scale = .8\linewidth/6cm,
            declare function = {
                hzero(\t) = .15 * (sin(\t*240 + 20) + .5*cos(\t*420));
                hone(\t)  = 1 + .15 * (cos(\t*300) + .5*sin(\t*540 + 40));
            }
        ]

        \path[use as bounding box] (0,-.8) rectangle (6,1.5);

        \draw[fill] (0,-.5) circle (.5pt) node[below] {$\partial\omega$} -- node[below] {$\omega \subset \R^d$} (6,-.5) circle (.5pt) node[below] {$\partial\omega$};

        \draw[fill=black!10] plot[domain=0:6] (\x, {hzero(\x)}) -- plot[domain=6:0] (\x, {hone(\x)}) -- cycle;

        \draw[line width=.5pt, dash dot, opacity=.5] (0, 0) -- (6, 0) node[right] {$y=0$} (0, 1) -- (6, 1) node[right] {$y=\varepsilon z$};

        \begin{scope}[line width=1.5pt]

            \clip plot[domain=0:6] (\x, {hzero(\x)}) -- plot[domain=6:0] (\x, {hone(\x)}) -- cycle;

            \draw[fill=black!25] (2.5, .5) circle (1.2) (5, .5) circle (.6) (.7, .6)  circle (.15);
            \draw[fill=black!10] (3, .4) circle (.2);

            \begin{scope}[line width=2pt]
                \clip (2.5, .5) circle (1.2);
                \draw[domain=0:6] plot (\x, {hzero(\x)}) plot (\x, {hone(\x)});
            \end{scope}

            \begin{scope}[line width=2pt]
                \clip (5, .5) circle (.6);
                \draw[domain=0:6] plot (\x, {hzero(\x)}) plot (\x, {hone(\x)});
            \end{scope}

        \end{scope}

        \node at (2.2, .5) {\Huge $E$};
        \node[anchor=south east] at (6, -.5) {$y = \varepsilon h_0(x)$};
        \node[anchor=north east] at (5.5, 1.5) {$y = \varepsilon z + \varepsilon h_1(x)$};
        \node[anchor=north west] at (.3, -.1) {$\sigma=\sigma_0(x)$};
        \node[anchor=south west] at (.42, 1.1) {$\sigma=\sigma_1(x)$};
        \node[anchor=south, rotate=90]  at (-.05, .5) {$\sigma=0$};
        \node[anchor=south, rotate=-90] at (6.05, .5) {$\sigma=0$};
        \draw[|<->|, line width=.5] (-.4, 1) -- node[left] {$\varepsilon z$} (-.4, 0);
        \draw[<->, line width=.5] (3.9, {hzero(3.9)}) -- node[right] {$\varepsilon g(x)$} (3.9, {hone(3.9)});

    \end{tikzpicture}
    \caption{Schematic view of the thin container $\Omega_\varepsilon \subset \R^{d+1}$ (light grey) constructed over a domain $\omega \subset \R^d$ and liquid $E$ (dark grey).}\label{fig:container}
\end{figure}

Therefore, the functional we are looking at is
\[
    F_\varepsilon(E) \coloneq \Per(E,\Omega_\varepsilon) + \int_{\partial\Omega_\varepsilon} \chi_E \sigma( \x ) \dH^d( \x ) ,
\]
and we are interested in approximating the minimisers of the variational problem
\begin{equation}\label{eq:minFe}
    m_\varepsilon \coloneq \min \{ F_\varepsilon(E) : E \subset \Omega_\varepsilon , \abs{ E } = \varepsilon a \} ,
\end{equation}
where $a \in (0,z \abs{ \omega })$ is a given fraction of the total volume of the container~$\Omega_\varepsilon$. Figure~\ref{fig:container} provides an illustration of this problem.

In order to take the limit as $\varepsilon \downto 0$, it is convenient to work on the fixed domain $\Omega \coloneq \omega \times (0,1)$. We achieve this by means of the diffeomorphism
\begin{equation}\label{eq:diffeo}
    \phi_\varepsilon \colon \omega \times (0,1) \to \Omega_\varepsilon , \quad \phi_\varepsilon(x,t) \coloneq \bigl( x, \varepsilon zt + \varepsilon h(x,t) \bigr) ,
\end{equation}
with
\[
    h(x,t) \coloneq (1-t) h_0(x) + t h_1(x)
\]
as the linear interpolation between the plates' defining functions. By a change of variables formula (cf.~\cite{maggi:2012}*{Theorem 17.1}), for any set $E \subset \overline{\Omega_\varepsilon}$ of finite perimeter and any continuous function~$f$, we have
\[
    \int_{\partial^*\!E} f \dH^d = \int_{\phi_\varepsilon^{-1}(\partial^*\!E)} ( f \circ \phi_\varepsilon ) \abs[\big]{ \operatorname{cof}\nabla\phi_\varepsilon \cdot \mathbf{n}_{\phi_\varepsilon^{-1}(\partial^*\!E)} } \dH^d .
\]
Here $\operatorname{cof}\nabla\phi_\varepsilon = (\det \nabla\phi_\varepsilon) (\nabla\phi_\varepsilon^{-1})^\top$ represents the \emph{cofactor matrix} of the gradient $\nabla\phi_\varepsilon$, while $\mathbf n_{\partial^*\!E}$ is the measure theoretic outward unit normal to a set~$E$. Thus, we can express the value of the functional~$F_\varepsilon$ at a set~$E$ in terms of its preimage $\phi_\varepsilon^{-1}(E)$ and $\mathbf{n}_{\phi_\varepsilon^{-1}(\partial^*\!E)} = (\nu,\eta)$ as
\begin{multline}\label{eq:Fe}
    F_\varepsilon(E) = \int_{\phi_\varepsilon^{-1}(\partial^*\!E) \cap \Omega} \sqrt{ \varepsilon^2 \abs{ g\nu - \eta\nabla_{\!x} h }^2 + \eta^2} \dH^d + {} \\
    \int_\omega \chi_{\phi_\varepsilon^{-1}(E)}(x,0) \sigma_0(x) \sqrt{ 1 + \varepsilon^2 \abs{ \nabla h_0(x) }^2 } \diff x + {} \\
    \int_\omega \chi_{\phi_\varepsilon^{-1}(E)}(x,1) \sigma_1(x) \sqrt{ 1 + \varepsilon^2 \abs{ \nabla h_1(x) }^2 } \diff x .
\end{multline}
On the other hand, the volume constraint simply becomes
\begin{equation}\label{eq:vol-constraint}
    \int_E \diff{x,y} = \varepsilon \int_{\phi_\varepsilon^{-1}(E)} g(x) \diff{x,t} = \varepsilon a .
\end{equation}
Therefore, we can redefine the functional~$F_\varepsilon$ using formula~\eqref{eq:Fe} on subsets $E \subset \Omega$, identifying each set with its image under the diffeomorphism~$\phi_\varepsilon$. The domain of~$F_\varepsilon$ is then given by all sets $E \subset \Omega$ of finite perimeter which satisfy the volume constraint in~\eqref{eq:vol-constraint}. As is customary, we set $F_\varepsilon(E) = \infty$ otherwise.

Our first result identifies the $\Gamma$-limit of the family $\{ F_\varepsilon \}_{\varepsilon>0}$. The topology we employ is the one given by the strong $L^1(\Omega)$-convergence, meaning that a sequence of sets $\{ E_\varepsilon \}_{\varepsilon>0}$ converges to a set $E \subset \Omega$ if, and only if,
\[
    \lim_{\varepsilon \downto 0} \int_\Omega{ \abs{ \chi_{E_\varepsilon} - \chi_E } }\diff\x = \lim_{\varepsilon \downto 0} | E_\varepsilon \symdiff E | = 0 .
\]

\begin{theorem}\label{thm:Gammalim0}
    The family of functionals $\{ F_\varepsilon \}_{\varepsilon>0}$ has the $\Gamma$-limit, with respect to the $L^1(\Omega)$-convergence as $\varepsilon \downto 0$, given by
    \[
        F^0(E) \coloneq \abs{ \mu_E } (\Omega) + \int_{\partial\Omega} \sigma \diff\mu_E ,
    \]
    with~$\mu_E$ the Radon measure characterised by the integration by parts in the vertical direction
    \[
        \int_E \frac{\partial\varphi}{\partial t} \diff{x,t} = \int_{\R^d \times \R} \varphi \diff\mu_E , \quad \varphi \in C^\infty_0(\R^d \times \R) .
    \]
    The domain of~$F^0$ consists of all sets $E \subset \Omega$ such that
    \[
        \sup_{\substack{\varphi \in C^\infty_0(\R^{d+1}) \\ \abs{ \varphi } \leq 1}} \int_E \frac{\partial\varphi}{\partial t} \diff{x,t} < \infty , \qquad \int_E g(x) \diff{x,t} = a .
    \]
\end{theorem}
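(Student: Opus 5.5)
We prove the $\liminf$ and $\limsup$ inequalities separately. We also record compactness, which we need in order to handle the volume constraint. The guiding observation is that the integrand in~\eqref{eq:Fe} satisfies
\[
    \abs{\eta} \le \sqrt{\varepsilon^2\abs{g\nu-\eta\nabla_{\!x}h}^2+\eta^2} \le \abs{\eta} + \varepsilon\bigl(\norm{g}_{L^\infty}+\norm{\nabla_{\!x} h}_{L^\infty}\bigr) .
\]
Moreover, for a set of finite perimeter $E\subset\Omega$ the vertical component of the Gauss--Green measure is exactly $\mu_E$ restricted to $\Omega$, i.e.\ $\eta\,\Hausdorff^d\llcorner\partial^*\!E\cap\Omega = \mu_E\llcorner\Omega$ up to sign. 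The part of $\mu_E$ on $\partial\Omega$ consists of the traces: $-\chi_E(x,0)\,\diff x$ on the bottom face and $\chi_E(x,1)\,\diff x$ on the top face. The lateral boundary carries no vertical normal, so it contributes nothing there. Consequently $F_\varepsilon(E) \ge \abs{\mu_E}(\Omega) + \int_{\partial\Omega}\sigma\diff\abs{\mu_E} + O(\varepsilon)$ on sets of finite perimeter, with the $O(\varepsilon)$ coming only from the factors $\sqrt{1+\varepsilon^2\abs{\nabla h_j}^2}$. The sign convention in $\int_{\partial\Omega}\sigma\diff\mu_E$ is to be read with the traces, i.e.\ as $\int_\omega \sigma_0\chi_E(x,0)+\sigma_1\chi_E(x,1)\diff x$. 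We first fix this identification of $F^0$ on finite-perimeter sets.

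\emph{Liminf.} Let $E_\varepsilon\to E$ in $L^1(\Omega)$ with $\sup F_\varepsilon(E_\varepsilon)<\infty$. The boundary terms are bounded by $\norm{\sigma}_{L^\infty}\abs{\omega}$, so $\abs{\mu_{E_\varepsilon}}(\Omega)$ is bounded. Hence $\mu_{E_\varepsilon}\weakstarto\mu_E$ as measures on $\R^{d+1}$, by passing to the limit in the integration-by-parts identity, and $E$ lies in the domain of $F^0$; the volume constraint passes to the limit by $L^1$ convergence. Plain weak-$*$ lower semicontinuity is insufficient, because the trace term has indefinite sign. We therefore use the wetting condition $\norm{\sigma}_{L^\infty}<1$ through the standard device of writing $F^0$ as the total variation of a single measure on an enlarged domain. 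Extend $\Omega$ vertically to $\omega\times(-\delta,1+\delta)$ and regard $E$ as a subset of it. The bottom trace term is $\int \sigma_0 \diff(\mu_E\llcorner\{t=0\})^-$, and the combination $\abs{\mu}(\Omega)+\int_{\partial\Omega}\sigma\diff\mu$ equals $\sup\{\int\varphi\diff\mu : \varphi\in C_c(\omega\times(-\delta,1+\delta)),\ \abs{\varphi}\le 1,\ \varphi=\pm\sigma_j\text{ on the faces}\}$ (the classical Modica-type representation of capillary energies as a supremum over admissible fields, cf.~\cite{maggi:2012}*{Chapter 19}). This is a supremum of weak-$*$ continuous functionals, hence lower semicontinuous. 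Uniform continuity of $\sigma_j$ lets us take continuous test functions. Sending the $O(\varepsilon)$ error to zero gives $\liminf F_\varepsilon(E_\varepsilon)\ge F^0(E)$. I expect this step to be the main obstacle, namely justifying the supremum representation and handling lateral boundary effects near $\partial\omega$.

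\emph{Limsup.} Given $E$ in the domain of $F^0$, for a.e.\ $x$ the slice $E_x\subset(0,1)$ has finite variation, so $E$ is a union of vertical intervals. First approximate $E$ by sets of finite perimeter: mollify in $x$ only, which does not increase $\abs{\mu_E}$, then take a superlevel set via coarea in the vertical-only total variation, and restore the volume $\int_E g = a$ by a small vertical perturbation. The energy converges: the interior term converges since $\abs{\mu}$ is continuous along such approximations, and the trace term converges by strict convergence of the slices. A diagonal argument then reduces the problem to $E$ of finite perimeter. For such $E$, take the constant recovery sequence $E_\varepsilon=E$, adjusted for volume, which is already exact. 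The bound above gives $F_\varepsilon(E)\le F^0(E)+\varepsilon C(\Per(E,\Omega)+\abs{\omega})\to F^0(E)$. Density of finite-perimeter sets in energy, combined with $\Gamma$-limsup lower semicontinuity, concludes the proof.
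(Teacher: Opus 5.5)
Your overall structure is sound, but the volume-restoration step in your recovery construction fails in the case that matters most. Suppose $\abs{\mu_E}(\Omega)=0$, i.e.\ $E=E_0\times(0,1)$ by Lemma~\ref{lem:mu0}; by Proposition~\ref{prop:E0} this case contains every minimiser of $F^0$. Then mollifying in $x$ and taking superlevel sets yields cylinders $E_{0,n}\times(0,1)$, and in general $\int_{E_{0,n}}g\neq a$. Any vertical deformation $\psi(x,t)=(x,\xi(x,t))$ with $\xi(x,\blank)$ a diffeomorphism of $(0,1)$ maps such a cylinder onto itself, so it cannot change $\int g$. Indeed, the vertical flow of $(0,\varphi/g)$ changes the $g$-volume at rate $\int_\Omega\varphi\diff\mu_E$, so it only helps when $\abs{\mu_E}(\Omega)\neq0$. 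In the cylindrical case you need a horizontal deformation, as in Lemma~\ref{lem:approx-E}. The flow of $g^{-1}X$ with $\int_{E_0}\div X\diff x\neq0$ keeps the sets cylindrical and changes the volume at rate $\int_{E_{0,n}}\div X\diff x$, which is bounded away from zero uniformly in $n$. Alternatively, add or remove a slab $A\times(1-\tau,1)$: this changes $F^0$ by at most $(1+\norm{\sigma}_{L^\infty})\abs{A}$ and absorbs a volume defect $\delta_n\to0$ with $\abs{A}\sim\delta_n$. With either repair your limsup goes through.

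Apart from this, you follow the paper's two-step structure, and in two places your version is more careful than the paper's. In the liminf, Proposition~\ref{prop:F0-liminf} integrates by parts against the interpolated $\sigma^\varepsilon$ and then passes to the limit in the plate term separately in \eqref{eq:limse}. That step is not valid when interfaces of $E_\varepsilon$ with vertical normal collapse onto a plate. For example, a film $A\times(1-\tau_\varepsilon,1)$ with $\tau_\varepsilon\to0$ shifts the limiting trace by $\int_A\sigma_1$ without affecting the $L^1$ limit. The inequality survives only because the excess interior energy $\abs{A}$ pays for this. Your supremum representation encodes exactly this; the paper's $\sigma(x,t)$ is just one admissible test field in it. The representation itself is easy to justify: take $\varphi=\psi+\theta$, with $\psi\in C_c(\Omega)$ nearly realising $\abs{\mu_E}(\Omega)$, and $\theta$ carrying the face values in a layer whose $\abs{\mu_E}$-mass inside $\Omega$ tends to zero. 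The lateral cut-off you worry about costs only $O(\abs{\{x\in\omega:\operatorname{dist}(x,\partial\omega)<r\}})$, uniformly in $\varepsilon$. In the limsup, mollifying only in $x$ commutes with the traces on $\{t=0\}$ and $\{t=1\}$, and superlevel sets commute with traces for all but countably many levels. Hence the plate term converges and the vertical variation stays below $\abs{\mu_E}(\Omega)$ outright. By contrast, the full mollification in Lemma~\ref{lem:approx-E} smears the plate parts of $\mu_E$ into $\Omega$ unless $\chi_E$ is extended by reflection. The paper also does not track the trace term when it asserts $F^0(E_n)\to F^0(E)$ in Proposition~\ref{prop:F0-limsup}.
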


The $\Gamma$-limit~$F^0$ already provides useful information about the minimisers of~$F_\varepsilon$. In fact, a complete characterisation of the minimisers of~$F^0$ is possible, as provided by the next proposition.

\begin{proposition}\label{prop:E0}
    The minimisers of~$F^0$ are sets of the form $E = E_0 \times (0,1)$, where $E_0 \subset \omega$ is any measurable subset which satisfies
    \begin{equation}\label{eq:minF0}
        \{ \overline\sigma < \lambda^*g \} \subset E_0 \subset \{ \overline\sigma \leq \lambda^*g \} , \qquad \int_{E_0} g(x) \diff x = a ,
    \end{equation}
    where $\lambda^*$ is the supremum of $\lambda \in \R$ such that the integral
    \[
        \int_{\{ \overline\sigma < \lambda g \}} g(x) \diff x < a ,
    \]
    and we denote by $\overline\sigma = \sigma_0 + \sigma_1$ the sum of the relative adhesion coefficients of each plate.
\end{proposition}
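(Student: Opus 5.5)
We reduce $F^0$ to a functional of the vertical slices and then solve a linear (bathtub-type) optimisation problem on~$\omega$. For a set $E$ in the domain of~$F^0$, the measure $\mu_E$ is the distributional $t$-derivative of $\chi_E$ (extended by zero), so $\mu_E = \mu_E\llcorner\Omega + \mu_E\llcorner(\omega\times\{0,1\})$. The boundary parts are $-\chi_E(x,0)\,\mathcal L^d$ on the bottom and $+\chi_E(x,1)\,\mathcal L^d$ on the top, with the traces understood slicewise: for a.e.\ $x$ the slice $E_x\subset(0,1)$ is a finite union of intervals. Since $\sigma$ vanishes on the lateral walls, we obtain
\[
    F^0(E) = \int_\omega \Bigl( \abs{D_t\chi_{E_x}}\bigl((0,1)\bigr) + \sigma_0(x)\chi_{E_x}(0^+) + \sigma_1(x)\chi_{E_x}(1^-) \Bigr)\diff x ,
\]
where the signs in the boundary terms follow the convention of~\eqref{eq:Fe} (wetted portions cost~$\sigma_j$). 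The first task is therefore to justify this slicing formula, by disintegrating $\mu_E$ along $x$ exactly as in the one-dimensional BV slicing theory.

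Next comes a pointwise lower bound. Fix $x$ and write $\ell(x)=\abs{E_x}\in[0,1]$. We claim that the integrand is at least $\ell(x)\,\overline\sigma(x)$, with equality if and only if $\ell(x)\in\{0,1\}$, or $\ell(x)\in(0,1)$ in the degenerate case of equality discussed below. The case analysis runs as follows.
\begin{itemize}
\item If $\ell=0$, the integrand is $\geq 0$, which matches the bound.
\item If $E_x=(0,1)$, the integrand equals $\sigma_0+\sigma_1=\overline\sigma$, which again matches.
\item If $0<\ell<1$, then $E_x$ has at least one interior jump. If it touches neither plate, it has at least $2$ jumps. If it touches exactly one plate, it has at least $1$ jump and the value is $\geq 1+\sigma_j$. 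If it touches both plates, it has at least $2$ jumps and the value is $\geq 2+\overline\sigma$.
\end{itemize}
Using $\abs{\sigma_j}<1$, in every case of the third item the value strictly exceeds $\min\{0,\overline\sigma\}\geq$ the relevant convex combination $\ell\overline\sigma$ or $0$. Integrating gives $F^0(E)\geq\int_\omega \overline\sigma(x)\,\ell(x)\diff x$, where by the volume constraint $\int_\omega g\,\ell\diff x=a$.

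The remaining task is the relaxed linear problem: minimise $\int_\omega \overline\sigma\,\ell\diff x$ over $0\le\ell\le1$ subject to $\int_\omega g\,\ell=a$. This is the bathtub principle with weight~$g$. Define $\lambda^*$ as in the statement; by monotone convergence, $\int_{\{\overline\sigma<\lambda^*g\}}g\le a\le\int_{\{\overline\sigma\le\lambda^* g\}}g$. We then get
\[
    \int\overline\sigma\ell = \lambda^*a + \int_\omega(\overline\sigma-\lambda^*g)\ell ,
\]
and the last integral is minimised exactly when $\ell=1$ on $\{\overline\sigma<\lambda^*g\}$ and $\ell=0$ on $\{\overline\sigma>\lambda^*g\}$. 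Hence every set $E_0\times(0,1)$ as in~\eqref{eq:minF0} attains the lower bound and is a minimiser. Such $E_0$ exist, by choosing a subset of the level set $\{\overline\sigma=\lambda^*g\}$ of the right $g$-measure, using that the measure is non-atomic. Conversely, a minimiser must achieve equality in both inequalities. The pointwise strict inequality forces $\ell\in\{0,1\}$ and $E_x\in\{\emptyset,(0,1)\}$ for a.e.\ $x$, and the bathtub equality forces the inclusions in~\eqref{eq:minF0}.

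The main obstacle is the strictness argument when $\ell\in(0,1)$. The claimed value $\ell\overline\sigma$ versus at least $1+\sigma_j$ or $2$ jumps must be compared carefully. In the one-plate case we need $1+\sigma_0>\ell\overline\sigma$ and $1+\sigma_0>0$; the second follows from $\abs{\sigma_0}<1$, and the first follows from $\min(0,\overline\sigma)\le\ell\overline\sigma$ only when combined with $1+\sigma_0>\sigma_0+\sigma_1$. We will verify these elementary inequalities explicitly, together with the rigorous slice/trace identification of $\mu_E$ on $\partial\Omega$.
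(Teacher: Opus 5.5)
Your proof is correct in substance, but it obtains the key lower bound $F^0(E)\geq\int_E\overline\sigma$ differently from the paper. The paper does not slice. It uses the interpolated coefficient $\sigma(x,t)=(t-1)\sigma_0(x)+t\sigma_1(x)$, whose $t$-derivative is $\overline\sigma$ and whose traces are $-\sigma_0$ and $\sigma_1$, to rewrite $F^0(E)=\abs{\mu_E}(\Omega)-\int_\Omega\sigma\diff\mu_E+\int_E\overline\sigma$. From this it concludes $F^0(E)\geq(1-\norm{\sigma}_{L^\infty})\abs{\mu_E}(\Omega)+\int_E\overline\sigma$, with equality iff $\abs{\mu_E}(\Omega)=0$, i.e.\ iff $E$ is cylindrical by Lemma~\ref{lem:mu0}. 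It then runs the same bathtub argument you give.

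Your fibrewise case analysis is this calibration carried out by hand on each vertical fibre: there the integrand minus $\ell\overline\sigma$ equals $\abs{D\chi_{E_x}}((0,1))+\int_{(0,1)}\sigma(x,\cdot)\diff{D\chi_{E_x}}$.

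The paper's route has several advantages:
\begin{itemize}
\item It is shorter.
\item It needs neither the directional-BV slicing theorem nor the identification of $\mu_E$ on $\omega\times\{0,1\}$ through slice traces. You would have to import both, e.g.\ from Ambrosio--Fusco--Pallara, since such $E$ need not have finite perimeter.
\item It gives a quantitative coercivity estimate whose $\varepsilon$-version reappears in Proposition~\ref{prop:F1-liminf}.
\end{itemize}
Your route has its own advantages:
\begin{itemize}
\item It gives a transparent geometric reason why partially filled fibres are never optimal: each pays at least one unit of liquid--vapour interface, which no adhesion gain can compensate when $\abs{\sigma_j}<1$.
\item It checks explicitly, via non-atomicity of $g\diff x$ on $\{\overline\sigma=\lambda^*g\}$, that admissible $E_0$ exist. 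The paper leaves this implicit, but it is needed for the relaxed minimum to be attained by a set.
\end{itemize}

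One slip must be fixed. For $0<\ell<1$ you compare the fibre energy with $\min\{0,\overline\sigma\}$ and assert $\min\{0,\overline\sigma\}\geq\ell\overline\sigma$. This is backwards: $\ell\overline\sigma$ is a convex combination of $0$ and $\overline\sigma$, so $\min\{0,\overline\sigma\}\leq\ell\overline\sigma\leq\max\{0,\overline\sigma\}$, and exceeding the minimum proves nothing. The correct comparison is with $\max\{0,\overline\sigma\}$. You need:
\begin{itemize}
\item $2>\max\{0,\overline\sigma\}$;
\item $1+\sigma_j>\max\{0,\overline\sigma\}$, that is, $\sigma_j>-1$ and $\sigma_{1-j}<1$;
\item $2+\overline\sigma>\max\{0,\overline\sigma\}$.
\end{itemize}
All three are immediate from $\abs{\sigma_j}<1$, and they are exactly the elementary inequalities you list at the end. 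With this, the fibre inequality is strict for every $\ell\in(0,1)$, so your hedge about a ``degenerate case of equality'' should be removed.
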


\begin{remark}
    Proposition~\ref{prop:E0} is the rigorous justification of the \emph{filling-up} ansatz used in the physics literature (cf.~the discussion in the introduction). It asserts that, at leading order, the liquid occupies exactly the sublevel set $\{ \overline\sigma < \lambda^* g \}$, that is, the region where the combined adhesion of the two plates is favourable enough relative to the local gap, the comparison being made through the ratio $\overline\sigma / g$. The threshold~$\lambda^*$ is the Lagrange multiplier associated with the volume constraint and plays the role of a chemical potential. It fixes the level $\{ \overline\sigma = \lambda^* g \}$ up to which the gap floods, in direct analogy with the flooding up to the Kelvin radius assumed in~\cites{persson:2008, peng-hsia-woutersen-ea:2022}. The direction in which the gap floods is governed by the sign of the adhesion. Recall that the relative adhesion coefficient is related to Young's contact angle~$\theta$ by $\sigma = -\cos\theta$, so that a plate is energetically favourable to wet ($\sigma < 0$) precisely when $\theta < \pi/2$, and unfavourable ($\sigma > 0$) when $\theta > \pi/2$; the neutral angle $\theta = \pi/2$ gives $\sigma = 0$. For homogeneous plates, where $\sigma_0$ and~$\sigma_1$ are constant, the combined coefficient~$\overline\sigma$ is constant and the wetted region becomes a level set of the gap~$g$ alone. If $\overline\sigma > 0$ (energetically unfavourable contact) the regions with the \emph{widest} gap fill first, whereas if $\overline\sigma < 0$ (favourable contact) the \emph{narrowest} regions fill first. Intuitively, per unit of liquid volume a narrow gap exposes more plate area, so favourable contact is maximised there while unfavourable contact is avoided. The latter case is the physically relevant capillary-condensation regime, in which the liquid fills the regions where the plates are closest, recovering the bearing-area picture.
\end{remark}

By one of the fundamental properties of $\Gamma$-convergence (see Theorem~\ref{thm:Gamma-min1}), as an immediate corollary we deduce the \emph{dimension reduction} for the minimum problem~\eqref{eq:minFe}.

\begin{corollary}[Dimension reduction]
    Let $E_\varepsilon \subset \Omega$ be a minimising set for~$F_\varepsilon$. Then, every limit point of $\{ E_\varepsilon \}_{\varepsilon>0}$ as $\varepsilon \downto 0$ is a set of the form $E_0 \times (0,1)$, with $E_0 \subset \omega$.
\end{corollary}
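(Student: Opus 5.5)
The corollary follows from the fundamental theorem of $\Gamma$-convergence (Theorem~\ref{thm:Gamma-min1}) combined with Theorem~\ref{thm:Gammalim0} and Proposition~\ref{prop:E0}. By Theorem~\ref{thm:Gamma-min1}, if $E_\varepsilon$ minimises $F_\varepsilon$ and $E_{\varepsilon_k} \to E$ in $L^1(\Omega)$ along some sequence $\varepsilon_k \downto 0$, then $E$ minimises the $\Gamma$-limit $F^0$. Proposition~\ref{prop:E0} then says that $E = E_0 \times (0,1)$ with $E_0$ satisfying~\eqref{eq:minF0}. The only thing to verify before invoking Theorem~\ref{thm:Gamma-min1} is its hypotheses: the $\Gamma$-convergence (given) and, if that theorem is stated for subsequences, that the $\Gamma$-limit is also the $\Gamma$-limit along every subsequence $\varepsilon_k$, which is automatic.

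In more detail I would argue directly, without assuming equicoercivity. Fix a subsequence with $E_{\varepsilon_k}\to E$ in $L^1$. Take any competitor $G$ in the domain of $F^0$ and a recovery sequence $G_k \to G$ with $\limsup F_{\varepsilon_k}(G_k)\le F^0(G)$. Minimality gives $F_{\varepsilon_k}(E_{\varepsilon_k})\le F_{\varepsilon_k}(G_k)$. Combining this with the liminf inequality gives
\[
    F^0(E) \le \liminf_k F_{\varepsilon_k}(E_{\varepsilon_k}) \le \limsup_k F_{\varepsilon_k}(G_k) \le F^0(G).
\]
The volume constraint passes to the limit since $g$ is bounded: $\varepsilon$-rescaled, $\int_{E_{\varepsilon_k}} g = a$ converges to $\int_E g = a$. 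Also $F^0(E)<\infty$ by the liminf bound, using any fixed admissible $G$ such as $E_0\times(0,1)$ with $E_0$ from Proposition~\ref{prop:E0}. Hence $E$ lies in the domain of $F^0$ and is a minimiser.

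The main (mild) obstacle is purely bookkeeping: ensuring the limit set lies in the domain of $F^0$, that is, has finite vertical variation and satisfies the limiting volume constraint. Both follow from the liminf inequality and the $L^1$ convergence. Existence of limit points is not claimed, so no compactness argument is needed. If desired, compactness could be added by noting that $F_\varepsilon(E_\varepsilon)\le C$ bounds $\abs{\mu_{E_\varepsilon}}(\Omega)$, but this yields only vertical BV control, so I would not rely on it here.
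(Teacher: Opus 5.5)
Your proposal is correct and takes the same route as the paper. The paper deduces the corollary immediately from Theorem~\ref{thm:Gamma-min1} applied to Theorem~\ref{thm:Gammalim0}, and uses Proposition~\ref{prop:E0} to identify the minimisers of $F^0$ as cylindrical sets; your direct argument just unpacks the standard proof of Theorem~\ref{thm:Gamma-min1}, and your remarks on the volume constraint passing to the limit and on not needing compactness are accurate.
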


It follows from the definition of~$\lambda^*$ in Proposition~\ref{prop:E0} that the inequalities
\[
    \int_{\{\overline\sigma<\lambda^*g\}} g(x) \diff x \leq a \leq \int_{\{\overline\sigma \leq \lambda^*g\}} g(x) \diff x
\]
hold \dash indeed, since
\[
    \{ \overline\sigma < \lambda g \} = \bigcup_{k=1}^\infty{ \{ \overline\sigma < (\lambda - 1/k) g \} } , \qquad \{ \overline\sigma \leq \lambda g \} = \bigcap_{k=1}^\infty{ \{ \overline\sigma < (\lambda + 1/k) g \} }
\]
and so
\begin{multline*}
    \int_{ \{ \overline\sigma < \lambda^* g \} } g(x) \diff x = \lim_{k\to\infty} \int_{ \{ \overline\sigma < (\lambda^* - 1/k)g \} } g(x) \diff x \leq a \leq \\
    \lim_{k\to\infty} \int_{ \{ \overline\sigma < (\lambda^* + 1/k)g \} } g(x) \diff x = \int_{\{ \overline\sigma \leq \lambda^* g \} } g(x) \diff x .
\end{multline*}
In the case that either of them is achieved, the minimiser of~$F^0$ is unique \dash equal to $\{ \overline\sigma < \lambda^* g \}$ or $\{\overline\sigma \leq \lambda^*g\}$ respectively, up to sets of measure zero \dash and therefore the asymptotic shape of any family of minimisers $\{ E_\varepsilon \}_{\varepsilon>0}$ is completely determined. In other words, they are \emph{locked} (cf.~\cite{braides-truskinovsky:2008}*{Section~1.2}). However, when the inequalities are strict, there is some room for improvement in the portion of the equality set $\{ \overline\sigma = \lambda^* g \}$, which can be captured by the next term in the $\Gamma$-development.

With this in mind, we set
\[
    m_0 = \min \biggl\{ F^0(E) : \int_E g(x) \diff{x,t} = a \biggr\} .
\]
Note that, by Proposition~\ref{prop:E0}, $m_0$~can be expressed in terms of~$\lambda^*$ and the total volume~$a$ as
\[
    m_0 = \lambda^*a + \int_{\{ \overline\sigma < \lambda^* g \}} \overline\sigma(x) - \lambda^* g(x) \diff x .
\]
We may also identify a cylindrical set $E_0 \times (0,1) \subset \Omega$ with its cross section $E_0 \subset \omega$. In this sense, a sequence $E_\varepsilon \subset \Omega$ converges to $E_0 \subset \omega$ if $E_\varepsilon \to E_0 \times (0,1)$ in $L^1(\Omega)$.

Under the assumption that the obstacles $\{ \overline\sigma < \lambda^* g \}$ and $\{ \overline\sigma \leq \lambda^* g \}$ have finite perimeter, we find the next term at scale~$\varepsilon$.

\begin{theorem}\label{thm:Gammalim1}
    The family of rescaled functionals $\{ \varepsilon^{-1}(F_\varepsilon - m_0) \}_{\varepsilon>0}$ has the $\Gamma$-limit, with respect to the $L^1(\Omega)$-convergence as $\varepsilon \downto 0$, given by
    \[
        F^1(E) \coloneq \int_{\partial^*\!E \cap \omega} g(x) \biggl( \int_0^1 \sqrt{1-\sigma(x,t)^2} \diff t \biggr) \dH^{d-1}(x) ,
    \]
    where
    \[
        \sigma(x,t) \coloneq (t-1)\sigma_0(x) + t\sigma_1(x), \quad (x,t) \in \omega \times (0,1) ,
    \]
    is the linear interpolation between $-\sigma_0$ and~$\sigma_1$. The domain of~$F^1$ consists of all subsets~$E$ of~$\omega$ of finite perimeter which satisfy the constraints in~\eqref{eq:minF0}.
\end{theorem}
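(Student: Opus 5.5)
The approach is a calibration argument for the $\Gamma$-liminf and an explicit meniscus construction for the recovery sequence. Both rest on a one-dimensional cell problem across the gap.

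\emph{Cell problem.} At a point $x\in\partial^*E_0\cap\omega$ with normal $\nu_0$, write the liquid--vapour interface in rescaled coordinates as a graph $s=\varepsilon w(t)$ in the $(\nu_0,t)$ plane. Expanding \eqref{eq:Fe} and the volume constraint \eqref{eq:vol-constraint} to first order, the excess energy per unit length of $\partial^*E_0$ is
\[
\varepsilon\Bigl(\int_0^1\sqrt{g^2+w'^2}\,dt+\sigma_1w(1)-\sigma_0 w(0)-\lambda^*g\int_0^1 w\,dt\Bigr).
\]
The last term is the volume penalty weighted by the Lagrange multiplier from Proposition~\ref{prop:E0}. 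Its Euler--Lagrange equation is $\frac{d}{dt}\bigl(w'/\sqrt{g^2+w'^2}\bigr)=$ const, with natural boundary values $-\sigma_0$ and $\sigma_1$. Along the equality set $\overline\sigma=\lambda^*g$ these are compatible, and the solution satisfies $w'/\sqrt{g^2+w'^2}=\sigma(x,t)$, the stated linear interpolation; the interface is a circular meniscus. The optimal value is $g\int_0^1\sqrt{1-\sigma^2}\,dt$. This explains the weight in $F^1$ and dictates both bounds.

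\emph{Liminf via calibration.} Let $E_\varepsilon\to E$ with $\varepsilon^{-1}(F_\varepsilon(E_\varepsilon)-m_0)$ bounded.
\begin{enumerate}
\item Show that $E=E_0\times(0,1)$ with $E_0$ satisfying \eqref{eq:minF0}. This follows from Theorem~\ref{thm:Gammalim0} and Proposition~\ref{prop:E0}, since $F_\varepsilon(E_\varepsilon)\to m_0$.
\item Use the pointwise inequality $\sqrt{a^2+b^2}\ge a\sqrt{1-s^2}+bs$ for $|s|<1$. Apply it with $b=\eta$, $a=\varepsilon(g\nu-\eta\nabla_xh)\cdot\psi$, where $\psi$ is any smooth compactly supported horizontal field with $|\psi|\le1$, and $s=\sigma(x,t)$.
\item The resulting integrand is the flux of $Z_\varepsilon=(\varepsilon g\sqrt{1-\sigma^2}\,\psi+O(\varepsilon\nabla h),\,\sigma)$ through $\partial^*E_\varepsilon$. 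Apply the divergence theorem in $\Omega$.
\item The boundary contributions at $t=0,1$ are $\sigma_1\chi(x,1)-\sigma_0\chi(x,0)$, which exactly cancel the wetting terms to leading order. The $O(\varepsilon^2|\nabla h_j|^2)$ corrections are harmless.
\item The vertical divergence gives $\int_{E_\varepsilon}\overline\sigma$. Combine it with the volume constraint through $\lambda^*$ to obtain $\int_{E_\varepsilon}(\overline\sigma-\lambda^*g)+\lambda^*a$. This dominates the $m_0$ part, by the slicewise minimality shown in Proposition~\ref{prop:E0}, with a nonnegative remainder.
\item What survives at order $\varepsilon$ is $\int_{E_\varepsilon}\operatorname{div}_x\bigl(g\sqrt{1-\sigma^2}\,\psi\bigr)$. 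As $\varepsilon\to0$ this converges to $\int_{E_0}\operatorname{div}_x\bigl(g\int_0^1\sqrt{1-\sigma^2}\,dt\,\psi\bigr)dx$.
\item Take the supremum over $\psi$. This gives the weighted perimeter $F^1(E_0)$, using lower semicontinuity and duality for the continuous weight.
\end{enumerate}
One care point is the $\eta\nabla_xh$ term. It multiplies $\varepsilon\psi$ and is integrated against a measure of bounded total mass, so it contributes only $O(\varepsilon)$ multiplied by something controlled. It must be absorbed by choosing the horizontal component of the field appropriately, or handled by first approximating $h$ and $g$ by smooth functions.

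\emph{Recovery sequence.} First take $E_0$ with smooth boundary inside the equality region, relative to the obstacles. Build $E_\varepsilon$ by placing, in a tubular neighbourhood of $\partial E_0$, the optimal meniscus profile $\varepsilon w(x,t)$ from the cell problem, varying smoothly in $x$. Then correct the volume by an $O(\varepsilon)$ normal shift in a region where $\overline\sigma=\lambda^*g$, so the correction costs nothing at first order. Where the obstacle boundaries are reached (meaning $E_0$ touches $\{\overline\sigma<\lambda^*g\}$ or $\{\overline\sigma\le\lambda^*g\}$), the interface sits where $\overline\sigma\neq\lambda^*g$ on one side. There a vertical-wall or half-meniscus profile gives the same weight to first order, because the first-order error there has a favourable sign. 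The general case then follows by a density argument in the class \eqref{eq:minF0}, approximating in $L^1$ and in weighted perimeter. This step uses the assumed finite perimeter of the obstacles.

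\emph{Main obstacle.} I expect the main obstacle to be the liminf step. Specifically, one has to verify that the zeroth-order remainder in the calibration argument is genuinely nonnegative for sets that are not cylindrical, while simultaneously controlling the $\nabla_xh$ cross terms. If the direct calibration fails there, I would instead localise by a blow-up at $\mathcal H^{d-1}$-a.e.\ point of $\partial^*E_0$ in the manner of Fonseca--Müller, and reduce to the cell problem with frozen coefficients.
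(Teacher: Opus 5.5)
\paragraph{The $\liminf$ step.} This part is correct. It is the dual form of the paper's argument. The paper minimises $\Phi^\varepsilon-\sigma^\varepsilon\eta$ pointwise in $\eta$, which gives $\varepsilon\abs{g\nu-\eta\nabla_{\!x}h}\sqrt{1-(\sigma^\varepsilon)^2}$, and then needs perimeter compactness and lower semicontinuity of $\int g\abs{\nu}\sqrt{1-\sigma^2}$. Your Cauchy--Schwarz calibration against $\psi$ needs only $L^1$ convergence, and finiteness of $\Per(E_0,\omega)$ comes for free from the supremum.

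The nonnegativity you flag as the main obstacle is automatic. The bathtub argument in Proposition~\ref{prop:E0} is carried out for arbitrary $f=\int_0^1\chi_E\diff t$, so $\int_{E_\varepsilon}\overline\sigma\ge m_0$ for every admissible $E_\varepsilon$.

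Three points need tightening:
\begin{itemize}
\item The cross term $\varepsilon\int\sqrt{1-\sigma^2}\,\eta\,\nabla_{\!x}h\cdot\psi$ must be $o(\varepsilon)$. A bounded total mass only gives $O(\varepsilon)$. What you need is $\int_{\partial^*\!E_\varepsilon\cap\Omega}\abs{\eta^\varepsilon}\dH^d\le C\varepsilon$. This follows from $\Phi^\varepsilon-\sigma\eta\ge(1-\norm{\sigma}_{L^\infty})\abs{\eta}$, the bathtub inequality and the energy bound, exactly as in~\eqref{eq:eta_bound}.
\item The $\sigma_j$ are only continuous, so $\div_x(g\sqrt{1-\sigma^2}\psi)$ need not exist. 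Use a smooth $\tilde s\approx\sigma$ in the calibration instead; this costs at most $\norm{\tilde s-\sigma}_{L^\infty}\int\abs{\eta^\varepsilon}=O(\delta\varepsilon)$.
\item In your cell problem the bottom wetting term should be $+\sigma_0w(0)$. With your sign the compatibility condition would read $\sigma_1-\sigma_0=\lambda^*g$ rather than $\overline\sigma=\lambda^*g$.
\end{itemize}

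\paragraph{The recovery sequence: genuine gap.} Your meniscus is built in a tubular neighbourhood of $\partial E_0$, so it needs $E_0$ smooth. You then appeal to density of smooth sets in the class~\eqref{eq:minF0}, in $L^1$ and weighted perimeter. That density is false in general, because the obstacles can leave no freedom at all. If $\int_{\{\overline\sigma<\lambda^*g\}}g=a$ (the locked case), the only admissible set is $\{\overline\sigma<\lambda^*g\}$ itself. This is merely a set of finite perimeter: for constant $\overline\sigma$ it is a sublevel set of the Lipschitz gap $g$, for instance a polygon or something rougher.

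Approximants outside the class are useless. A fixed violation costs a positive bulk amount $\int(\overline\sigma-\lambda^*g)(\chi_{E'}-\chi_{E_0})$, which blows up after dividing by $\varepsilon$.

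Your patch at the obstacles is also wrong. A vertical wall has first-order cost $g$ per unit length, not $g\int_0^1\sqrt{1-\sigma^2}\diff t$. No special profile is needed there anyway. The sets $\{\overline\sigma<\lambda^*g\}$ and $\{\overline\sigma>\lambda^*g\}$ are open, and $E_0$ has density $1$, respectively $0$, on them. Hence $\partial^*\!E_0\subset\{\overline\sigma=\lambda^*g\}$ always, and by continuity an $O(\varepsilon)$ displacement of the interface on either side costs only $o(\varepsilon)$ in bulk energy.

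\paragraph{How to close it.} Build the meniscus for an arbitrary finite-perimeter $E_0$ without smoothing $E_0$, as the paper does:
\begin{itemize}
\item Deform $E_0\times(0,1)$ by the flow of the horizontal shear field $w(x,t)\,(X(x),0)$.
\item Take $X\in C^\infty_0(\omega;\R^d)$ with $\abs{X}\le1$ and $\int_{\partial^*\!E_0}X\cdot\nu\dH^{d-1}>\Per(E_0,\omega)-\delta$, so that $X$ approximates $\nu$ in $L^2(\partial^*\!E_0)$.
\item Take $\partial_tw=u\approx-g\sigma/\sqrt{1-\sigma^2}$, which is your cell-problem profile.
\item Restore the volume by the implicit-function correction $b(\varepsilon)(X,0)$ of Lemma~\ref{lem:b0}.
\end{itemize}
The first-order energy is then
\[
\int_{\partial^*\!E_0}\int_0^1\Bigl(\sqrt{g^2+u^2(X\cdot\nu)^2}+u\,\sigma\,X\cdot\nu\Bigr)\diff t\dH^{d-1}.
\]
This tends to $F^1(E_0)$ as $\delta\to0$, and a diagonal argument finishes the proof.
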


\begin{remark}
    As a functional, the zeroth-order $\Gamma$-limit~$F^0$ depends on neither the roughness~$h$ nor the gap~$g$ of the plates; the gap enters only through the volume constraint $\int_E g = a$, and hence through the location of the minimisers. The first-order term~$F^1$, by contrast, depends on the geometry of the plates only through the gap $g = z + h_1 - h_0$: for instance, two rough plates yield the same asymptotic behaviour as one flat plate facing a rough one whose roughness is $h = h_1 - h_0$. In particular, when the two plates are equal the gap $g \equiv z$ is constant, and the asymptotic behaviour is the same as that of flat plates.
\end{remark}

A consequence of the development is the $\Gamma$-equivalence (see Definition~\ref{def:Gamma-equiv}) with a family of dimensionally reduced functionals.

\begin{corollary}\label{cor:Ge}
    Suppose the sublevel sets $\{ \overline\sigma < \lambda^* g \}$ and $\{ \overline\sigma \leq \lambda^* g \}$ have finite perimeter in $\omega$. Then, the family of functionals $\{ F_\varepsilon \}_{\varepsilon>0}$ is equivalent at order $\varepsilon^\beta$, for all $\beta \in (0,1]$, to the family given by
    \[
        G_\varepsilon(E) \coloneq \int_E \overline\sigma(x) \diff x + \varepsilon \int_{\partial^*\!E \cap \omega} g(x) \biggl( \int_0^1 \sqrt{1-\sigma(x,t)^2} \diff t \biggr) \dH^{d-1}(x) .
    \]
    The domain of~$G_\varepsilon$ consists of all the sets $E \subset \omega$ of finite perimeter which satisfy
    \[
        \int_E g(x) \diff x = a .
    \]
    We identify $E \subset \omega$ with its cylindrical extension $E \times (0,1) \subset \Omega$.

    \begin{proof}
        It is easy to see that the family $\{ G_\varepsilon \}_{\varepsilon>0}$ has a $\Gamma$-limit $G^0 \coloneq \Gammalim_{\varepsilon \downto 0} G_\varepsilon$ with respect to $L^1(\omega)$-convergence defined by
        \[
            G^0(E) = \int_E \overline\sigma(x) \diff x ,
        \]
        with the same domain. Proposition~\ref{prop:E0} implies that the minimisers and the minimum value of~$G^0$ are the same as that of~$F^0$ (after identification between $E$ and $E \times (0,1)$). Therefore, from the definition of~$G_\varepsilon$ we deduce that
        \[
            \Gammalim_{\varepsilon \downto 0} \frac{ G_\varepsilon - m_0 }{ \varepsilon } = F^1 .
        \]
        It follows that $\{ G_\varepsilon \}_{\varepsilon>0}$ is equivalent to $\{ F_\varepsilon \}_{\varepsilon>0}$ at every order~$\varepsilon^\beta$, for $\beta \in (0,1]$.
    \end{proof}
\end{corollary}

Corollary~\ref{cor:Ge} establishes the expansion of the original capillary energy~$F_\varepsilon$, in the sense of a Braides--Truskinovsky~\cite{braides-truskinovsky:2008} $\Gamma$-equivalence, as the first-order Anzellotti--Baldo~\cite{anzellotti-baldo:1993} $\Gamma$-development
\begin{equation}\label{eq:Gamma-expansion}
    F_\varepsilon = G_\varepsilon + o(\varepsilon), \quad \text{where} \quad G_\varepsilon = F^0 + \varepsilon F^1
\end{equation}
when restricted to cylindrical sets.

By the compactness of minimising sequences, we can also deduce the expansion for the minimal energy.

\begin{corollary}
    Suppose the sublevel sets $\{ \overline\sigma < \lambda^* g \}$ and $\{ \overline\sigma \leq \lambda^* g \}$ have finite perimeter in $\omega$. Then, the minimum of Problem~\eqref{eq:minFe} is approximated by
    \begin{multline*}
        m_\varepsilon = \lambda^*a + \int_{\{ \overline\sigma < \lambda^* g \}} \overline\sigma(x) - \lambda^*g(x) \diff x + {} \\
        \varepsilon \int_{\partial^*\!E_0 \cap \omega} g(x) \biggl( \int_0^1 \sqrt{1-\sigma(x,t)^2} \diff t \biggr) \dH^{d-1}(x) + o(\varepsilon) ,
    \end{multline*}
    with $E_0 \subset \omega$ any minimiser of~$F^1$.
    \begin{proof}
        Let $E_0 \subset \omega$ be such that $F^1(E_0)$ is well defined, and let $E_\varepsilon$ be minimisers of Problem~\eqref{eq:minFe}. Then there exists a sequence $E_\varepsilon' \to E_0$ in $L^1(\Omega)$ so that
        \[
            F_\varepsilon(E_\varepsilon) \leq F_\varepsilon(E_\varepsilon') = m_0 + \varepsilon F^1(E_0) + o(\varepsilon) ,
        \]
        since $\{ E_\varepsilon \}_{\varepsilon>0}$ is a minimising sequence. It follows that
        \[
            c \Per(E_\varepsilon,\Omega) \leq F^1(E_0) + o(1) , \quad \text{as } \varepsilon \downto 0 ,
        \]
        where $c > 0$ is a lower bound for the weight $g(x) \int_0^1 \sqrt{ 1 - \sigma(x,t)^2 } \diff t$. By the compactness of sets of finite perimeter, for every sequence $\varepsilon_k \downto 0$ we can extract a further subsequence (still indexed by $k$) such that $E_{\varepsilon_k} \to E$ in $L^1(\Omega)$, for some set $E \subset \Omega$ of finite perimeter. The conclusion follows after an application of Theorem~\ref{thm:Gamma-min2}.
    \end{proof}
\end{corollary}

\begin{remark}
    We note that the $O(1)$-scale of the energy derived in Theorem~\ref{thm:Gammalim0} is present only when there is an effective interaction with the solid plates, that is, where $\overline\sigma \neq 0$. If $\overline\sigma \equiv 0$, then the obstacles in equation~\eqref{eq:minF0} collapse to $\varnothing$ and~$\omega$, and we are left only with a weighted isoperimetric problem in~$\omega$. In other words, only the $O(\varepsilon)$~term, of the next order, derived in Theorem~\ref{thm:Gammalim1} remains.

    The case $\sigma \equiv 0$ corresponds to a neutral substrate, that is, a contact angle of~$90^\circ$ (recall that the relative adhesion coefficient is related to Young's contact angle~$\theta$ by $\sigma = -\cos\theta$). Here neither plate energetically prefers to be wetted, so $\overline\sigma \equiv 0$ forces $\lambda^* = 0$ and the zeroth-order energy~$m_0$ vanishes identically; in other words, every admissible region of the prescribed volume is a minimiser of~$F^0$. Consequently, if the relative adhesion is small compared to~$\varepsilon$ in the sense of $\overline\sigma = o(\varepsilon)$, the bulk adhesion term is negligible compared to the perimeter contribution, and thus there is \emph{no filling-up property}. Unlike in Proposition~\ref{prop:E0}, the zeroth-order limit does not localise the liquid according to the gap width, and the wetted region is selected by the first-order weighted-perimeter functional~$F^1$ alone.
\end{remark}

It may well be the case that the set $\{ \overline\sigma \leq \lambda^* g \}$ has infinite perimeter, even for smooth $\overline\sigma$ and~$g$. Even more, it could happen (e.g.~when the minimisers are locked and the limiting set has infinite perimeter) that every minimising family $\{ E_\varepsilon \}_{\varepsilon>0}$ satisfies
\[
    \liminf_{\varepsilon \downto 0} \varepsilon^{-1} ( F_\varepsilon(E_\varepsilon) - m_0 ) = \infty ,
\]
and we cannot guarantee the existence of the next term in the development. However, different scales~$\varepsilon^\beta$, for $\beta \in (0,1]$, may be able to capture the irregularities of the boundary. We exemplify this phenomenon in the following theorem, in which the asymptotic value of the minimal energy~$m_\varepsilon$ in~\eqref{eq:minFe} is bounded in terms of the Minkowski dimension (see Definition~\ref{def:minkowski}) of the boundary of limiting minimising sets.

\begin{theorem}\label{thm:irreg-mins}
    Let $\alpha \in [d-1,d)$. Let $E_0 \subset \omega$ be a set of volume $|E_0| = a$ whose boundary $\partial E_0 \cap \omega$ has Minkowski dimension $\alpha$ and satisfies the density conditions
    \[
        c \leq \frac{ | E_0 \cap B_r(x) | }{ |B_r(x)| } \leq 1 - c , \quad x \in \partial E_0 ,
    \]
    for some $c > 0$ and all sufficiently small $r > 0$. Consider the domain $\Omega_\varepsilon = \omega \times (0,\varepsilon)$, with $\sigma_0 \equiv 0$, and $\sigma_1 = \frac12 \chi_{\omega \setminus E_0}$. Then, the minimum value of the energy~$F_\varepsilon$ has the asymptotic behaviour
    \[
        m_\varepsilon \sim \varepsilon^{d-\alpha} .
    \]
\end{theorem}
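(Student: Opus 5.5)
The plan is to prove matching upper and lower bounds $c\,\varepsilon^{d-\alpha} \le m_\varepsilon \le C\,\varepsilon^{d-\alpha}$, working directly in $\Omega_\varepsilon = \omega\times(0,\varepsilon)$. I will read $m_\varepsilon \sim \varepsilon^{d-\alpha}$ with the Minkowski dimension realised along the family in question; if only $\dim = \alpha$ is known, each bound below picks up an $\varepsilon^{\pm\delta}$ loss.

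First the setting. Here $g\equiv 1$, $\overline\sigma = \sigma_1 = \tfrac12\chi_{\omega\setminus E_0}\ge 0$, and the volume constraint reads $\abs{E} = \varepsilon\abs{E_0}$. Then $m_0 = 0$: the cylinder $E_0\times(0,1)$ is admissible for $F^0$ with zero energy, and every term of $F^0$ is nonnegative. Moreover, by Proposition~\ref{prop:E0}, $E_0$ is the unique (locked) minimiser. So it suffices to estimate $m_\varepsilon$ itself.

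For the \emph{upper bound} I would discretise at scale $\varepsilon$. Let $\mathcal Q_\varepsilon$ be the grid cubes of side $\varepsilon$ meeting $E_0$, and let $E^\varepsilon$ be their union. By the definition of Minkowski dimension, the number of cubes meeting $\partial E_0$ is $N(\varepsilon)\lesssim\varepsilon^{-\alpha}$. Hence $\Per(E^\varepsilon,\omega)\lesssim N(\varepsilon)\,\varepsilon^{d-1}$ and $\abs{E^\varepsilon\setminus E_0}\le N(\varepsilon)\,\varepsilon^d$. The competitor $E^\varepsilon\times(0,\varepsilon)$ then has:
\begin{itemize}
\item lateral perimeter at most $\varepsilon\cdot N\varepsilon^{d-1}\lesssim\varepsilon^{d-\alpha}$;
\item adhesion cost $\tfrac12\abs{E^\varepsilon\setminus E_0}\lesssim\varepsilon^{d-\alpha}$;
\item volume excess at most $\varepsilon N\varepsilon^d$.
\end{itemize}
To restore the exact volume I would remove a column $A\times(0,\varepsilon)$, or lower the height of a column over a region $A$, with $A$ well inside $E_0$ and $\abs{A}\lesssim N\varepsilon^d$. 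This adds horizontal interface of area $\lesssim\varepsilon^{d-\alpha}$ and no adhesion cost, since $\sigma=0$ there. Hence $m_\varepsilon\lesssim\varepsilon^{d-\alpha}$.

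The \emph{lower bound} is the main obstacle. Suppose $F_\varepsilon(E)\le\delta\varepsilon^{d-\alpha}$ with $\delta$ small, and derive a contradiction.

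\emph{Step 1: trace--volume comparison.} Integrating along vertical segments, and using the constraint $\sigma_0\equiv0$, the top trace $\chi_E(\cdot,\varepsilon)$ differs in $L^1$ from $u(x) := \varepsilon^{-1}\abs{E\cap(\{x\}\times(0,\varepsilon))}$ by at most $\Per(E,\Omega_\varepsilon)$. This holds on any subdomain.

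\emph{Step 2: little volume outside $E_0$.} By Step 1, $\varepsilon^{-1}\abs{E\cap((\omega\setminus E_0)\times(0,\varepsilon))}\le 2F_\varepsilon(E)+\Per(E,\Omega_\varepsilon)\lesssim\delta\varepsilon^{d-\alpha}$. The volume constraint then gives $\abs{(E_0\times(0,\varepsilon))\setminus E}\lesssim\delta\varepsilon^{d+1-\alpha}$.

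\emph{Step 3: packing and Markov.} Take a maximal $\varepsilon$-separated family $\{x_i\}\subset\partial E_0$. Its cardinality is comparable to the covering number, so $\#\{x_i\}\gtrsim\varepsilon^{-\alpha}$. Form the disjoint cylinders $C_i = B_{\varepsilon/2}(x_i)\times(0,\varepsilon)$; these are isotropic at scale $\varepsilon$. The density condition gives $\abs{E_0\cap B_{\varepsilon/2}(x_i)}$ and $\abs{B_{\varepsilon/2}(x_i)\setminus E_0}$ both $\ge c\varepsilon^d$. By Markov's inequality and Step 2, all but a fraction $O(\delta)$ of the cylinders satisfy:
\begin{itemize}
\item $E$ fills at least half of $(E_0\cap B)\times(0,\varepsilon)$, so $E$ is not nearly empty in $C_i$;
\item $E$ occupies only a small fraction of $(B\setminus E_0)\times(0,\varepsilon)$, so $E$ is not nearly full in $C_i$.
\end{itemize}

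\emph{Step 4: conclusion.} By the relative isoperimetric inequality in $C_i$, which is bi-Lipschitz to a unit ball after rescaling by $\varepsilon$, each such cylinder carries $\Per(E,C_i)\ge c'\varepsilon^d$. Summing over the good cylinders gives $F_\varepsilon(E)\ge c''\varepsilon^{d-\alpha}$, which contradicts the assumption once $\delta$ is small. This yields $m_\varepsilon\gtrsim\varepsilon^{d-\alpha}$.

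The delicate points are Steps~1 and~3: keeping the trace estimate localised to each cylinder, and ensuring the packing count really is $\gtrsim\varepsilon^{-\alpha}$ (using the density bounds so that packing and covering numbers are comparable).
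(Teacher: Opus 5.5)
Your proof is correct, but both halves take a different route from the paper's.

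\textbf{Upper bound.} The paper uses the hypograph of the mollification $u_\varepsilon = \chi_{E_0} * \varrho_\varepsilon$. This competitor preserves the volume exactly and has $\{u_\varepsilon = 1\} \subset E_0$, so the adhesion term vanishes. Its slope is $O(1)$ in rescaled variables on $\{0<u_\varepsilon<1\} \subset B_\varepsilon(\partial E_0)$, which gives $F_\varepsilon \le C\abs{B_\varepsilon(\partial E_0)}$ in one line. Your vertical cylinder over the $\varepsilon$-grid cubes works equally well. It pays an adhesion cost of the same order and needs a separate volume correction, but both are harmless.

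\textbf{Lower bound.} The paper first replaces a minimiser by the hypograph of its vertical density $u$. This is a Steiner-type monotone rearrangement: it does not increase perimeter and it shrinks the top trace, using $\sigma_0 \equiv 0$ and $\sigma_1 \ge 0$. The result is the $d$-dimensional coercivity estimate $F_\varepsilon \ge \varepsilon\abs{\nabla u}(\omega) + \tfrac14\norm{u - \chi_{E_0}}_{L^1(\omega)}$. The paper then applies the linear $L^1$-Poincaré inequality on the balls $B_\varepsilon(x)$, $x \in \partial E_0$, uses the density condition, and sums over $\sim\varepsilon^{-\alpha}$ disjoint balls.

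You stay in $d+1$ dimensions instead. Your slicing estimate (top trace versus vertical average) replaces the rearrangement. The relative isoperimetric inequality in the cylinders $C_i$ replaces Poincaré. Because that inequality is nonlinear in the volumes, you need the Markov selection and the dichotomy on $\delta$.

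What your route buys is that it never has to justify the rearrangement for the relative perimeter in the slab. It also only needs $\sigma_0, \sigma_1 \ge 0$. The sign of $\sigma_0$ enters in your Step~2, through $\int \chi_E \sigma_1 \le F_\varepsilon$, not in Step~1 as you state. You could drop the Markov step by using the $(d+1)$-dimensional Poincaré inequality $\norm{\chi_E - \mean{\chi_E}}_{L^1(C_i)} \le C\varepsilon\Per(E,C_i)$ together with the triangle inequality against $\chi_{E_0\times(0,\varepsilon)}$. That makes the estimate linear, so you can sum directly as the paper does.

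Finally, packing and covering numbers are comparable for any set, so that step does not need the density bounds. The density bounds are used only to put mass $\gtrsim \varepsilon^d$ on both sides of $\partial E_0$ in each ball.
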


\begin{remark}
    A set satisfying the hypotheses of Theorem~\ref{thm:irreg-mins} is, for example, the classical Koch snowflake in the plane, which has a dimension $\alpha = {\log 4} / {\log 3}$. Other dimensions $\alpha \in (1,2)$ can be achieved by modifying the standard construction, replacing at each stage a segment by four other segments scaling at a ratio of $r = 4^{-1/\alpha}$.
\end{remark}

\section{Proofs}\label{sec:proofs}

The proofs of the $\Gamma$-convergence results are split into two standard steps: establishing the lower bound and constructing the recovery sequences (cf.~Definition~\ref{def:Gamma-lim}). Specifically, Theorem~\ref{thm:Gammalim0} follows from Proposition~\ref{prop:F0-liminf} (the $\liminf$~inequality) and Proposition~\ref{prop:F0-limsup} (the existence of recovery sequences). Similarly, the proof of Theorem~\ref{thm:Gammalim1} is a direct consequence of the $\liminf$~inequality in Proposition~\ref{prop:F1-liminf} and the corresponding recovery sequences provided by Proposition~\ref{prop:F1-limsup}.

Before we proceed, it is convenient then to define further
\[
    \Phi^\varepsilon(x,t;\nu,\eta) \coloneq \sqrt{ \varepsilon^2 \abs{ g(x)\nu - \eta\nabla_{\!x} h(x,t) }^2 + \eta^2 } , \quad (x,t) \in \Omega , ~ (\nu,\eta) \in \S^d ,
\]
the anisotropic norm induced by the diffeomorphism~$\phi_\varepsilon$ defined in~\eqref{eq:diffeo}, and the function
\[
    \sigma^\varepsilon(x,t) \coloneq \bigl( (t-1) \sigma_0(x) + t \sigma_1(x) \bigr) \sqrt{ 1 + \varepsilon^2 \abs{ \nabla_{\!x} h(x,t) }^2 } ,
\]
which interpolates the relative adhesion coefficients to the interior of~$\Omega$. A simple integration by parts then yields
\begin{equation}\label{eq:F}
    F_\varepsilon(E) = \int_{\partial^*\!E \cap \Omega} \Phi^\varepsilon(x,t;\nu,\eta) - \sigma^\varepsilon(x,t) \eta \dH^d(x,t) + \int_E \frac{\partial\sigma^\varepsilon}{\partial t}(x,t) \diff{x,t} .
\end{equation}

\begin{proposition}\label{prop:F0-liminf}
    Let $E_\varepsilon \to E$ in $L^1(\Omega)$ be such that
    \[
        \liminf_{\varepsilon \downto 0} F_\varepsilon(E_\varepsilon) < \infty .
    \]
    Then, the characteristic function~$\chi_E$ has a weak derivative~$\mu_E$ in the $t \in (0,1)$ variable of bounded variation in~$\Omega$, and
    \[
        \liminf_{\varepsilon \downto 0} F_\varepsilon(E_\varepsilon) \geq F^0(E) .
    \]
    \begin{proof}
        Let $E_\varepsilon \to E$ in $L^1(\Omega)$. Without loss of generality, suppose that
        \[
            \sup_{\varepsilon > 0} F_\varepsilon(E_\varepsilon) < \infty .
        \]
        From the simple estimate $\Phi^\varepsilon(x,t;\nu,\eta) \geq \abs{ \eta }$, we have
        \[
            F_\varepsilon(E_\varepsilon) \geq \int_{\partial^*\!E_\varepsilon \cap \Omega} \abs{ \eta^\varepsilon } \dH^d - \norm{ \sigma^\varepsilon }_{L^\infty} \int_{\partial^*\!E_\varepsilon \cap \partial\Omega} \dH^d .
        \]
        Also, we trivially have
        \[
            \abs{ \partial\Omega } \geq \int_{\partial^*\!E_\varepsilon \cap \partial\Omega} \dH^d = \int_{\partial^*\!E_\varepsilon \cap \partial\Omega} \abs{ \eta } \dH^d ,
        \]
        since $\abs{ \eta } = 1$ $\Hausdorff^d$-almost everywhere on~$\partial\Omega$. This implies that, up to subsequence, the vertical components of the Gauss--Green measures converge weakly to some Radon measure~$\mu$:
        \[
            \mu^\varepsilon \coloneq \eta^\varepsilon \dH^d |_{\partial^*\!E_\varepsilon} \weakstarto \mu , \quad \text{as } \varepsilon \downto 0 .
        \]
        This measure is characterised by the integration by parts in the vertical direction \dash for every $\varphi \in C^\infty_0(\R^d \times \R)$,
        \[
            \int_E \frac{\partial\varphi}{\partial t} \diff{x,t} = \lim_{\varepsilon \downto 0} \int_{E_\varepsilon} \frac{\partial\varphi}{\partial t} \diff{x,t} = \lim_{\varepsilon \downto 0} \int_{\partial^*\!E_\varepsilon} \eta^\varepsilon \varphi \dH^d = \int_{\R^d \times \R} \varphi \diff\mu .
        \]
        Note also that by our assumptions $\sigma^\varepsilon$ converges uniformly to the function $\sigma(x,t) \coloneq (t-1) \sigma_0(x) + t \sigma_1(x)$, so that
        \begin{align}\label{eq:limse}
            \lim_{\varepsilon \downto 0} \int_{E_\varepsilon} \frac{\partial\sigma^\varepsilon}{\partial t} \diff{x,t} - \int_{\partial^*\!E_\varepsilon \cap \Omega} \sigma^\varepsilon \eta^\varepsilon \dH^d & = \int_E \frac{\partial\sigma}{\partial t}(x,t) \diff{x,t} - \int_\Omega \sigma \diff\mu \notag \\
                                                                                                                                                                                                                & = \int_{\partial\Omega} \sigma \diff\mu .
        \end{align}
        On the other hand, since
        \[
            \int_{\partial^*\!E \cap \Omega} \abs{ \eta } \dH^d(x,t) = \sup_{\substack{\varphi \in C^\infty_0(\Omega) \\ \abs{ \varphi } \leq 1}} \int_E \frac{\partial\varphi}{\partial t} \diff{x,t} ,
        \]
        we can easily deduce that
        \begin{align}\label{eq:liminfPhi}
            \liminf_{\varepsilon \downto 0} \int_{\partial^*\!E_\varepsilon \cap \Omega} \Phi^\varepsilon(x,t;\nu^\varepsilon,\eta^\varepsilon) \dH^d(x,t) & \geq \liminf_{\varepsilon \downto 0} \int_{\partial^*\!E_\varepsilon \cap \Omega} \abs{ \eta^\varepsilon } \dH^d(x,t) \notag \\
                                                                                                                                                           & \geq \abs{ \mu } (\Omega) .
        \end{align}
        Putting~\eqref{eq:limse} and~\eqref{eq:liminfPhi} together yields
        \[
            \liminf_{\varepsilon \downto 0} F_\varepsilon(E_\varepsilon) \geq \abs{ \mu } (\Omega) + \int_{\partial\Omega} \sigma \diff\mu = F^0(E) . \qedhere
        \]
    \end{proof}
\end{proposition}

Before we turn to the construction of recovery sequences for the energy~$F^0$ in Proposition~\ref{prop:F0-limsup}, we show two technical lemmas needed for its proof.

\begin{lemma}\label{lem:mu0}
    Let $E \subset \Omega$ be measurable. Then, $\abs{ \mu_E } (\Omega) = 0$ if, and only if, $E = E_0 \times (0,1)$ for some measurable $E_0 \subset \omega$.
    \begin{proof}
        It is easy to see that $\mu_{E_0 \times (0,1)} = 0$ as, by Fubini's theorem,
        \[
            \int_E \frac{\partial\varphi}{\partial t} \diff{x,t} = \int_{E_0} \int_0^1 \frac{\partial\varphi}{\partial t} \diff t \diff x = 0
        \]
        for every $\varphi \in C^\infty_0(\Omega)$.

        To prove the converse, note that by definition
        \[
            \abs{ \mu_E } (\Omega) = \sup_{\substack{\varphi \in C^\infty_0(\Omega) \\ \abs{ \varphi } \leq 1}} \int_E \frac{\partial\varphi}{\partial t} \diff{x,t} .
        \]
        Hence, $\mu_E = 0$ implies that, actually, \emph{for every} $\varphi \in C^\infty_0(\Omega)$ we have that
        \[
            \int_{\R^d \times \R} \chi_E(x,t) \frac{\partial\varphi}{\partial t} \diff{x,t} = 0 .
        \]
        Thus, the characteristic function $\chi_E \in L^1(\Omega)$ has a weak partial derivative in the vertical direction which is identically zero. Therefore, by Fubini's theorem, for almost every $x \in \omega$ the function $\chi_E(x,\blank)$ is equal to a constant, and since it is a characteristic function its value must be either $0$~or~$1$. Hence, up to a set of measure zero, $E$ coincides with $\{ x \in \omega : \chi_E(x,\blank) \equiv 1 \} \times (0,1)$, as we were to prove.
    \end{proof}
\end{lemma}

The next approximation lemma might be well known, but we were unable to locate a reference in the literature in this specific form. For a similar approach in the particular case $g \equiv 1$ and~$\abs{ \mu_E }$ is replaced by the surface measure~$\Hausdorff^d|_{\partial^*\!E}$, see~\cite{maggi:2012}*{Lemma~17.21}.

\begin{lemma}\label{lem:approx-E}
    Let $E \subset \Omega$ be a set such that the measure~$\mu_E$ is defined and of finite total variation in~$\Omega$. Then, there exists a sequence $\{ E_n \}_{n=1}^\infty$ of smooth sets in~$\R^{d+1}$ converging to~$E$ in $L^1(\Omega)$, such that
    \begin{itemize}
        \item $\displaystyle \int_{E_n} g(x) \diff{x,t} = \int_E g(x) \diff{x,t}$ for all $n \in \N$,
        \item $\displaystyle \lim_{n \to \infty}{ \abs{ \mu_{E_n} }(\Omega) } = \abs{ \mu_E }(\Omega)$.
    \end{itemize}
    \begin{proof}
        We follow a standard regularisation procedure (cf.~\cite{maggi:2012}*{Theorem~13.8}). Let $\{ \varrho_n \}_{n=1}^\infty$ be the standard sequence of mollifiers, and set $f_n \coloneq \chi_E * \varrho_n$. We have $f_n \to \chi_E$ in $L^1(\Omega)$ as $n \to \infty$, with $0 \leq f_n \leq 1$ for all $n \in \N$. By the Morse--Sard lemma, we find that for almost every $s \in (0,1)$, the superlevel sets $E^s_n \coloneq \{ f_n > s \}$ have smooth boundaries. Moreover, $E^s_n \to E$ in $L^1(\Omega)$ as $n \to \infty$, and by the lower semicontinuity of the total variation, the measures
        \[
            \mu_{E^s_n} = \abs{ \nabla\!f_n }^{-1} \frac{\partial\!f_n}{\partial t} \Hausdorff^d \big|_{\partial E^s_n \cap \Omega}
        \]
        satisfy
        \[
            \liminf_{n \to \infty}{ \abs{ \mu_{E^s_n} }(\Omega) } \geq \abs{ \mu_E }(\Omega) .
        \]
        On the other hand, by the coarea formula and Fatou's lemma,
        \begin{multline*}
            \abs{ \mu_E }(\Omega) = \lim_{n \to \infty} \int_\Omega{ \abs*{ \frac{\partial\!f_n}{\partial t} } } \diff{x,t} = \lim_{n \to \infty} \int_0^1 \mkern-9mu \int_{ \partial E^s_n \cap \Omega } \abs{ \nabla\!f_n }^{-1} \abs*{ \frac{\partial\!f_n}{\partial t} } \dH^d \diff s \geq \\
            \int_0^1 \liminf_{n \to \infty}{ \abs{ \mu_{E^s_n} }(\Omega) } \diff s .
        \end{multline*}
        Therefore, we can fix $s \in (0,1)$ and pass to a subsequence (still indexed by~$n$) so that $E'_n \coloneq E^s_n \to E$ and $\abs{ \mu_{E'_n} }(\Omega) \to \abs{ \mu_E }(\Omega)$.

        We now show that the sets $\{ E_n' \}_{n=1}^\infty$ can be deformed to a sequence $\{ E_n \}_{n=1}^\infty$ so that, in addition, the equality
        \begin{equation}\label{eq:int-g}
            \int_{E_n} g(x) \diff{x,t} = \int_E g(x) \diff{x,t}
        \end{equation}
        holds for~$n$ sufficiently large.

        Suppose first that $\mu_E = 0$. Then, by Lemma~\ref{lem:mu0}, $E = E_0 \times (0,1)$ for some subset~$E_0$ of~$\omega$. Moreover, by the construction above, the sets~$E_n'$ are also of the form $E_{0,n}' \times (0,1)$, so $\mu_{E_n'} = 0$ too. Now, if $E_0$~is not the whole~$\omega$ (otherwise the result would be trivial), there exist a smooth vector field~$X$ and some constant $\delta > 0$, such that
        \begin{equation}\label{eq:vol-lower-bound}
            \int_{E_0} \div X \diff x \geq 2\delta .
        \end{equation}
        Suppose for a moment that~$g$ is a smooth function and consider the flow $\{ \psi_s \}_{s \in \R}$, generated by the smooth vector field $Y \coloneq g^{-1} X$. Define the functions
        \[
            v_n(s) \coloneq \int_{\psi_s(E_{0,n}')} g(x) \diff x , \quad s \in \R .
        \]
        Then, by classical formulas we have for all $s \in \R$
        \begin{align}
            v_n'(s)  & = \int_{\psi_s(E_{0,n}')} \div(gY) \diff x = \int_{\psi_s(E_{0,n}')} \div X \diff x , \label{eq:v1} \\
            v_n''(s) & = \int_{\psi_s(E_{0,n}')} \div \bigl( g^{-1}( \div X ) X \bigr) \diff x . \label{eq:v2}
        \end{align}
        By the $L^1(\Omega)$-convergence, it follows from~\eqref{eq:vol-lower-bound} and~\eqref{eq:v1} that $v_n'(0) \geq \delta$ for~$n$ large enough. On the other hand, it follows from~\eqref{eq:v2} that there is a positive constant~$C$ depending only on $\norm{ g }_{W^{1,\infty}}$ and $\norm{ X }_{W^{2,\infty}}$, such that $\abs{ v_n''(s) } \leq C$. Hence, there exists $\tau > 0$ independent of $n$ such that the functions~$v_n$ are strictly increasing over the interval $(-\tau,\tau)$, with the uniform bound
        \[
            \abs[\bigg]{ \int_{\psi_s(E_{0,n}')} g(x) \diff x - \int_{E_0} g(x) \diff x } \geq \delta \abs{ s } - \norm{ g }_{L^\infty} |E_{0,n}' \symdiff E_0|, \quad \text{for all} \quad \abs{ s } < \tau .
        \]
        Therefore, we can select an $s_n \in (-\tau,\tau)$ such that $s_n \to 0$ as $n \to \infty$, and~\eqref{eq:int-g} holds for $E_n \coloneq \psi_{s_n}(E_{0,n}') \times (0,1)$ and $n$ sufficiently large.

        For a general $g \in W^{1,\infty}(\Omega)$, we approximate it with its standard mollifications $g_k \coloneq g * \varrho_k$ ($k \in \N$). Since $g_k \to g$ in $L^\infty(\Omega)$, while $\norm{ \nabla g_k }_{L^\infty} \leq \norm{ \nabla g }_{L^\infty}$ for all $k$, this approximation adds just a small error in the previous computations which is independent of $n \in \N$.

        Now, suppose that $\abs{ \mu_E }(\Omega) \neq 0$. We can find a smooth $\varphi \in C^\infty_0(\Omega)$ such that
        \[
            \int_E \frac{\partial\varphi}{\partial t} \diff{x,t} > \abs{ \mu_E }(\Omega) - \delta .
        \]
        We consider the flow~$\psi_s$ generated by the vector field $\mathbf Y \coloneq (0, \ldots, \varphi/g)$ in $\R^{d+1}$. As $\div( g\mathbf Y ) = \frac{ \partial\varphi }{ \partial t }$, the analogous computations are carried out as before, and we obtain a sequence of sets $\{ E_n \}_{n=1}^\infty$ satisfying the volume constraint~\eqref{eq:int-g}.

        Furthermore, in this case the equality $\abs{ \mu_{E_n} }(\Omega) = \abs{ \mu_{E_n'} }(\Omega)$ also holds for all $n \in \N$. Indeed, $\psi_s(x,t) = (x,\xi_s(x,t))$ for some functions~$\xi_s$, so the Jacobian is given by
        \[
            \det\nabla\psi_s = \frac{ \partial\xi_s }{ \partial t } .
        \]
        Hence, for any given function $\zeta \in C^\infty_0(\Omega)$,
        \[
            \int_{E_n} \frac{ \partial\zeta }{ \partial t } \diff{x,t} = \int_{E_n'} \frac{ \partial\zeta }{ \partial t } \bigl( \xi_s(x,t) \bigr) \frac{ \partial\xi_s }{ \partial t } \diff{x,t} = \int_{E_n'} \frac{ \partial }{ \partial t } (\zeta \circ \xi_s) \diff{x,t} .
        \]
        Taking the supremum over all smooth functions $\abs{ \zeta } \leq 1$ shows the total variations agree.
    \end{proof}
\end{lemma}

\begin{proposition}\label{prop:F0-limsup}
    Let $E \subset \Omega$ be a set such that $F^0(E)$ is finite. Then, there exist a sequence of sets $\{ E_\varepsilon \}_{\varepsilon>0}$ converging to~$E$ in $L^1(\Omega)$ such that
    \[
        \lim_{\varepsilon \downto 0} F_\varepsilon(E_\varepsilon) = F^0(E) .
    \]
    \begin{proof}
        From the Proof of Proposition~\ref{prop:F0-liminf}, it follows that it is sufficient to show that the equality in~\eqref{eq:liminfPhi} can be achieved. If the set~$E$ has finite perimeter, we may take $E_\varepsilon = E$ for all $\varepsilon > 0$, as it is easy to see that
        \[
            \lim_{\varepsilon \downto 0} F_\varepsilon(E) = F^0(E)
        \]
        holds pointwise. When $E$~is~not of finite perimeter, we employ Lemma~\ref{lem:approx-E} to find a sequence $\{ E_n \}_{n=1}^\infty$ of smooth sets converging to~$E$ so that $\lim_{n \to \infty} F^0(E_n) = F^0(E)$. A diagonal argument shows the existence of a recovery sequence for~$E$.
    \end{proof}
\end{proposition}

We prove Proposition~\ref{prop:E0}, which we recall characterises the minimisers of the $\Gamma$-limit~$F^0$.

\begin{proof}[Proof of Proposition~\ref{prop:E0}]
    First, note that
    \[
        F^0(E) \geq (1 - \norm{ \sigma }_{L^\infty}) \abs{ \mu_E } (\Omega) + \int_E \overline\sigma \diff{x,t} \geq \int_E \overline\sigma \diff{x,t} ,
    \]
    with equality if, and only if, $\abs{ \mu_E } (\Omega) = 0$. If we write
    \[
        f(x) \coloneq \int_0^1 \chi_E(x,t) \diff t , \quad x \in \omega ,
    \]
    then also
    \begin{equation}\label{eq:constraints-f}
        0 \leq f \leq 1 , \qquad \int_\omega f(x) g(x) \diff x = a .
    \end{equation}
    Therefore,
    \[
        \min F^0(E) \geq \min \int_\omega f(x) \overline\sigma(x) \diff x ,
    \]
    where the minimisation is done over functions~$f$ subject to the constraints in~\eqref{eq:constraints-f}. It is enough then to show, thanks to Lemma~\ref{lem:mu0}, that functions $f_0 = \chi_{E_0}$ of the proposed form minimise the latter integral.

    This minimisation is a classical problem \dash see, for example, ‘the bathtub principle’ in~\cite{lieb-loss:2001}*{Theorem~1.14}. We provide a proof for the sake of completeness.

    So let $\lambda^* \leq \sup_\omega \overline\sigma/g$ be the supremum over all $\lambda$ such that
    \[
        \int_{ \{ \overline\sigma < \lambda g \} } g(x) \diff x < a .
    \]
    Now, let $f_0 \in L^\infty(\omega)$ be any measurable function such that it satisfies the conditions in~\eqref{eq:constraints-f}, together with $\chi_{\{\overline\sigma<\lambda^*g\}} \leq f_0 \leq \chi_{\{\overline\sigma\leq\lambda^*g\}}$, and let $f$ be another competitor. Let us also denote by $E \coloneq \{ \overline\sigma < \lambda^* g \}$. We then see
    \begin{align}
        \int_\omega f(x) \overline\sigma(x) \diff x & = \int_E f(x) \overline\sigma(x) \diff x + \int_{\omega \setminus E} f(x) \overline\sigma(x) \diff x \notag                  \\
                                                    & \geq \int_E f(x) \overline\sigma(x) \diff x + \lambda^* \int_{\omega \setminus E} f(x) g(x) \diff x \label{ineq:1}           \\
                                                    & = \int_E f(x) \overline\sigma(x) \diff x + \lambda^* \biggl( a - \int_E f(x) g(x) \diff x \biggr) \notag                     \\
                                                    & = \lambda^* \int_\omega f_0(x) g(x) \diff x + \int_E f(x) \bigl( \overline\sigma(x) - \lambda^* g(x) \bigr) \diff x \notag   \\
                                                    & \geq \lambda^* \int_\omega f_0(x) g(x) \diff x + \int_E \overline\sigma(x) - \lambda^* g(x) \diff x \label{ineq:2}           \\
                                                    & = \lambda^* \int_\omega f_0(x) g(x) \diff x + \int_E f_0(x) \bigl( \overline\sigma(x) - \lambda^* g(x) \bigr) \diff x \notag \\
                                                    & = \int_\omega f_0(x) \overline\sigma(x) \diff x . \notag
    \end{align}
    The inequality in~\eqref{ineq:1} is achieved if, and only if, $f = 0$ almost everywhere in $\{ \overline\sigma > \lambda^* g \}$, while the inequality in~\eqref{ineq:2} is achieved if, and only if, $f = 1$ almost everywhere in~$E$.
\end{proof}

\begin{proposition}\label{prop:F1-liminf}
    Let $\{ E_\varepsilon \}_{\varepsilon > 0}$ be a sequence of sets such that $E_\varepsilon \to E$ in $L^1(\Omega)$, and suppose that
    \[
        \liminf_{\varepsilon \downto 0} \varepsilon^{-1} \bigl( F_\varepsilon(E_\varepsilon) - m_0 \bigr) < \infty .
    \]
    Then, $E = E_0 \times (0,1)$, where $E_0 \subset \omega$ is a set of finite perimeter in~$\omega$. Furthermore,
    \[
        \liminf_{\varepsilon \downto 0} \varepsilon^{-1} \bigl( F_\varepsilon(E_\varepsilon) - m_0 \bigr) \geq F^1(E_0) .
    \]
    \begin{proof}
        Let $E_0 \subset \omega$ be a minimiser as given in Proposition~\ref{prop:E0}. We first expand the scaled energy $\varepsilon^{-1} (F_\varepsilon(E) - m_0)$ as
        \begin{multline}\label{eq:energy_expansion}
            F_\varepsilon(E) - m_0 = \int_{\partial^*\!E \cap \Omega} \sqrt{\varepsilon^2 \abs{ g(x)\nu - \eta\nabla_{\!x} h(x,t) }^2 + \eta^2} - \sigma^\varepsilon(x,t) \eta \dH^d(x,t) \\
            + \int_E \frac{\partial\sigma^\varepsilon}{\partial t}(x,t) \diff{x}\diff{t} - \int_{E_0} \overline\sigma(x) \diff x .
        \end{multline}
        Direct computation yields
        \[
            \frac{\partial\sigma^\varepsilon}{\partial t} = \overline\sigma \sqrt{1 + \varepsilon^2 \abs{ \nabla_{\!x} h }^2} + \varepsilon^2 \sigma \frac{\nabla g \cdot \nabla_{\!x} h}{\sqrt{1 + \varepsilon^2 \abs{ \nabla_{\!x} h }^2}} = \overline\sigma + O(\varepsilon^2) .
        \]
        Using this, we can estimate the volume terms in the energy expansion as
        \[
            \int_E \frac{\partial\sigma^\varepsilon}{\partial t} - \int_{E_0} \overline\sigma = \int_E \overline\sigma - \int_{E_0} \overline\sigma + O(\varepsilon^2) \geq -C \varepsilon^2 ,
        \]
        for some $C > 0$.

        To bound the perimeter term in~\eqref{eq:energy_expansion}, we observe that for any $p \in \R^d$ and $\sigma \in (-1,1)$, the strictly convex function $\eta \mapsto \sqrt{ \varepsilon^2 \abs{ p }^2 + \eta^2} - \sigma\eta$ attains its minimum at $\eta = \frac{ \varepsilon \abs{ p } \sigma}{\sqrt{1-\sigma^2}}$, taking the minimum value $\varepsilon \abs{ p } \sqrt{1-\sigma^2}$. Applying this inequality with $p = g \nu - \eta\nabla_{\!x} h$, we obtain
        \begin{align*}
            \sqrt{\varepsilon^2 \abs{ g \nu - \eta\nabla_{\!x} h }^2 + \eta^2} - \sigma^\varepsilon(x,t) \eta & \geq \varepsilon \abs{ g \nu - \eta\nabla_{\!x} h } \sqrt{1 - (\sigma^\varepsilon)^2}                                  \\
                                                                                                              & \geq \varepsilon \bigl( g \abs{ \nu } - \abs{ \nabla_{\!x} h } \abs{ \eta } \bigr) \sqrt{1 - (\sigma^\varepsilon)^2} .
        \end{align*}
        Consequently, for any set $E \subset \Omega$ of finite perimeter, we have the lower bound
        \begin{equation}\label{eq:surface_lower_bound}
            F_\varepsilon(E) - m_0 \geq \varepsilon \int_{\partial^*\!E \cap \Omega} \bigl( g \abs{ \nu } - \abs{ \nabla_{\!x} h } \abs{ \eta } \bigr) \sqrt{1 - (\sigma^\varepsilon)^2} \dH^d + O(\varepsilon^2) .
        \end{equation}

        Now, consider the sequence $E_\varepsilon \to E$ in $L^1(\Omega)$. By passing to a subsequence, we may assume without loss of generality that
        \[
            \sup_{\varepsilon > 0} \varepsilon^{-1} \bigl( F_\varepsilon(E_\varepsilon) - m_0 \bigr) < \infty .
        \]
        Since $\Phi^\varepsilon(x,t;\nu,\eta) \geq \abs{ \eta }$, from~\eqref{eq:energy_expansion} we find there exists a constant $C > 0$, independent of $\varepsilon$, such that
        \begin{equation}\label{eq:eta_bound}
            C\varepsilon \geq (1 - \norm{ \sigma^\varepsilon }_{L^\infty}) \int_{\partial^*\!E_\varepsilon \cap \Omega} \abs{ \eta^\varepsilon } \dH^d .
        \end{equation}
        Substituting~\eqref{eq:eta_bound} into~\eqref{eq:surface_lower_bound}, recalling that $\nabla_{\!x} h$ is bounded, we deduce
        \[
            \varepsilon^{-1} \bigl( F_\varepsilon(E_\varepsilon) - m_0 \bigr) \geq \int_{\partial^*\!E_\varepsilon \cap \Omega} g \abs{ \nu^\varepsilon } \sqrt{1 - (\sigma^\varepsilon)^2} \dH^d + O(\varepsilon) .
        \]

        As $\varepsilon \downto 0$, the term $g \sqrt{1 - (\sigma^\varepsilon)^2}$ converges uniformly to $g \sqrt{1 - \sigma^2}$, which is strictly bounded away from zero. This uniform convergence, together with the uniform energy bound assumed above, implies that the perimeters $\Per(E_\varepsilon,\Omega)$ are uniformly bounded by a constant independent of~$\varepsilon$. Therefore, by the compactness of sets of finite perimeter, the limit~$E$ must be a set of finite perimeter, for which lower semicontinuity yields
        \begin{equation}\label{eq:liminf1}
            \liminf_{\varepsilon \downto 0} \varepsilon^{-1} \bigl( F_\varepsilon(E_\varepsilon) - m_0 \bigr) \geq \int_{\partial^*\!E \cap \Omega} g \abs{ \nu } \sqrt{1 - \sigma^2} \dH^d .
        \end{equation}

        Finally, since $E$ takes the cylindrical form $E = E_0 \times (0,1)$, we have $\abs{ \nu } = 1$ for $\Hausdorff^{d-1}$-almost every point on the reduced boundary by the slicing properties of sets of finite perimeter \dash see, for instance,~\cite{maggi:2012}*{Theorem~18.11}. A direct application of Fubini's theorem in~\eqref{eq:liminf1} then establishes the desired $\liminf$~inequality.
    \end{proof}
\end{proposition}

As before, to aid the construction of recovery sequences for~$F^1$, we need a technical lemma to adjust the volume constraint along the sequence.

\begin{lemma}\label{lem:b0}
    Let $E_0 \subset \omega$ be a set of finite perimeter. Suppose $X \in C^\infty_0(\omega;\R^d)$ is such that
    \[
        \int_{E_0} \div(gX) \diff x = \int_{\partial^*\!E_0} g(x) X \cdot \nu \dH^{d-1}(x) \neq 0 ,
    \]
    and let $\mathbf X(x) \coloneq (X(x),0)$ denote the extension of~$X$ to~$\R^{d+1}$. Let $\mathbf Y \in C^\infty_0(\Omega;\R^{d+1})$ be a vector field, and denote by $\{ \psi_s' \}_{s \in \R}$ the flow it generates. Then, there exists an $\varepsilon_0 > 0$ and a $C^1$~function $b \colon (-\varepsilon_0,\varepsilon_0) \to \R$ such that $b(0) = 0$ and the family of mappings
    \[
        \psi_\varepsilon(x,t) \coloneq \psi_\varepsilon'(x,t) + b(\varepsilon) \mathbf X(x)
    \]
    satisfies
    \[
        \int_{\psi_\varepsilon(E \times (0,1))} g(x) \diff{x,t} = \int_E g(x) \diff x , \quad \text{for all} \quad \abs{ \varepsilon } < \varepsilon_0 .
    \]
    Moreover,
    \begin{equation}\label{eq:b'}
        b'(0) \int_E \div(gX) \diff x = - \int_{E \times (0,1)} \div \bigl( g(x) \mathbf Y(x,t) \bigr) \diff{x,t} .
    \end{equation}
    \begin{proof}
        For $\varepsilon > 0$ and $b \in \R$, define the mappings $\psi_{\varepsilon,b} \coloneq \psi_\varepsilon' + b \mathbf X$, and let
        \[
            V(\varepsilon,b) \coloneq \int_{\psi_{\varepsilon,b}(E \times (0,1))} g(x) \diff{x,t} .
        \]
        Then, $V$ defines a $C^1$~function such that
        \[
            \frac{ \partial V }{ \partial b }(0,0) = \int_{E \times (0,1)} \div(g\mathbf X) \diff{x,t} = \int_E \div(gX) \diff x \neq 0
        \]
        by assumption. The result follows from the implicit function theorem.
    \end{proof}
\end{lemma}

\begin{proposition}\label{prop:F1-limsup}
    For every $E_0 \subset \omega$ in the domain of~$F^1$, there exists a sequence $\{ E_\varepsilon \}_{\varepsilon>0}$ such that
    \[
        \lim_{\varepsilon \downto 0} \varepsilon^{-1} \bigl( F_\varepsilon(E_\varepsilon) - m_0 \bigr) = F^1(E_0) .
    \]
    \begin{proof}
        Let $\delta > 0$ and $X \in C^\infty_0(\omega;\R^d)$ be such that $\norm{ X }_{L^\infty(\omega)} \leq 1$ and
        \[
            \int_{E_0} \div X \diff x = \int_{\partial^*\!E_0} X \cdot \nu \dH^{d-1} > \Per(E_0,\omega) - \delta .
        \]
        In particular, we have
        \[
            \int_{\partial^*\!E_0} g(x) X \cdot \nu \dH^{d-1}(x) \geq \int_{\partial^*\!E_0} g(x) \dH^{d-1} - \delta \norm{ g }_{L^\infty(\omega)} > 0
        \]
        if $\delta$ is sufficiently small. Let $u \in C^\infty(\overline\Omega)$, and set
        \[
            w(x,t) \coloneq c + \int_0^t u(x,s) \diff s ,
        \]
        where $c \in \R$ is a constant to be determined later. Consider the smooth vector field
        \[
            \mathbf Y(x,t) \coloneq w(x,t) \bigl( X(x),0 \bigr) ,
        \]
        and the flow $\{ \psi_s' \}_{s \in \R}$ it generates. According to Lemma~\ref{lem:b0}, we can find a function $b(\varepsilon) = \varepsilon b'(0) + o(\varepsilon)$ such that the family of diffeomorphisms $\psi_\varepsilon \coloneq \psi_\varepsilon' + b(\varepsilon) \mathbf X$ preserves the integral
        \[
            \int_{\psi_\varepsilon(E)} g(x) \diff{x,t} = a ,
        \]
        where, as before, we are denoting $E = E_0 \times (0,1)$. Moreover, thanks to equation~$\eqref{eq:b'}$, we may adjust~$c$ so that $b'(0) = 0$. In this way, $\partial_\varepsilon|_{\varepsilon=0} \psi_\varepsilon = \mathbf Y$. Set $E_\varepsilon \coloneq \psi_\varepsilon(E)$, and let $(\nu^\varepsilon,\eta^\varepsilon) \coloneq \operatorname{cof} \nabla\psi_\varepsilon \cdot (\nu,0)$. If we recall the expansion
        \[
            \operatorname{cof} \nabla\psi_\varepsilon(x,t) = {\id} + \varepsilon \bigl( (\div \mathbf Y){\id} - (\nabla \mathbf Y)^\top \bigr) + o(\varepsilon) ,
        \]
        then it is easy to see that
        \begin{equation}\label{eq:ne-expansion}
            \nu^\varepsilon = \nu + O(\varepsilon) , \qquad \eta^\varepsilon = - \varepsilon u X \cdot \nu + o(\varepsilon) .
        \end{equation}
        On the other hand, by the Lipschitz continuity,
        \[
            g( \psi_\varepsilon(x,t) ) = g(x) + O(\varepsilon) , \qquad \nabla_{\!x} h( \psi_\varepsilon(x,t) ) = \nabla_{\!x} h(x,t) + O(\varepsilon) ,
        \]
        uniformly on $(x,t) \in \Omega$. Thus, the expansion of $\Phi^\varepsilon(\psi_\varepsilon(x,t);\nu^\varepsilon,\eta^\varepsilon)$ up to an $o(\varepsilon)$~error term is given by
        \begin{equation}\label{eq:Phie-expansion}
            \sqrt{ \varepsilon^2 \abs{ g( \psi_\varepsilon(x,t) )\nu^\varepsilon - \eta^\varepsilon \nabla_{\!x} h( \psi_\varepsilon(x,t) ) }^2 + (\eta^\varepsilon)^2 } = \varepsilon \sqrt{ g^2 + u^2(X \cdot \nu)^2 } + o(\varepsilon) .
        \end{equation}
        Finally, the area term on the plates expands as
        \begin{equation}\label{eq:area-expansion}
            \int_{E_\varepsilon} \frac{ \partial\sigma^\varepsilon }{ \partial t } \diff{x,t} = \int_{E_0} \overline\sigma(x) \diff x + \varepsilon \frac{ \diff }{ \diff\varepsilon }\bigg|_{\varepsilon=0} \int_{\psi_\varepsilon(E)} \overline\sigma(x) \diff{x,t} + o(\varepsilon) .
        \end{equation}
        Note that it follows from Proposition~\ref{prop:E0} that $\overline\sigma = \lambda^* g$ on $\partial^*\!E_0 \cap \omega$, and therefore
        \[
            \frac{ \diff }{ \diff\varepsilon }\bigg|_{\varepsilon=0} \int_{\psi_\varepsilon(E)} \overline\sigma(x) \diff{x,t} = \frac{ \diff }{ \diff \varepsilon } \bigg|_{\varepsilon=0} \lambda^* \int_{\psi_\varepsilon(E)} g(x) \diff{x,t} = 0 .
        \]
        Putting equations~\eqref{eq:ne-expansion}, \eqref{eq:Phie-expansion}, and~\eqref{eq:area-expansion} together, we deduce
        \begin{multline*}
            \lim_{\varepsilon \downto 0} \frac{ F_\varepsilon(E_\varepsilon) - m_0 }{ \varepsilon } = \int_{\partial^*\!E_0 \cap \omega} \int_0^1 \sqrt{ g(x)^2 + u(x,t)^2 (X \cdot \nu)^2 } \diff t \dH^{d-1}(x) + {} \\
            \int_{\partial^*\!E_0} \biggl( \int_0^1 u(x,t) \sigma(x,t) \diff t \biggr) X \cdot \nu \dH^{d-1}(x) .
        \end{multline*}
        By the arbitrariness of $u$, we may choose a smooth function which approximates uniformly on compact subsets of $\omega$, up to an error $O(\delta)$, the optimal profile
        \[
            - g(x) \frac{ \sigma(x,t) }{ \sqrt{ 1 - \sigma(x,t)^2 } } .
        \]
        Then, since $X$ approximates $\nu$ in $L^2(\partial^*\!E_0, \Hausdorff^{d-1})$, as
        \[
            \int_{\partial^*\!E_0 \cap \omega}{ \abs{ X - \nu }^2 }\dH^{d-1} \leq 2 \int_{\partial^*\!E_0 \cap \omega} (1-X\cdot\nu) \dH^{d-1} < 2\delta ,
        \]
        we may deduce
        \[
            \lim_{\varepsilon \downto 0} \frac{ F_\varepsilon(E_\varepsilon) - m_0 }{ \varepsilon } = F^1(E_0) + O(\delta) ,
        \]
        for any $\delta > 0$. A standard diagonalisation argument concludes the proof.
    \end{proof}
\end{proposition}

We conclude this section with the proof of Theorem~\ref{thm:irreg-mins}, which establishes the existence of different possible asymptotic behaviours for the minimum of~$F_\varepsilon$, depending on the regularity of the level sets of~$\overline\sigma$.

\begin{proof}[Proof of Theorem~\ref{thm:irreg-mins}]
    We first show the lower bound. Let $E \subset \Omega$ be a set of finite perimeter, and consider its \emph{density function}
    \[
        u(x) \coloneq \int_0^1 \chi_E(x,t) \diff t , \quad x \in \omega .
    \]
    Consider then the hypograph $E^* \coloneq \{ (x,t) \in \Omega : u(x) \geq t \}$. By Fubini's theorem, $|E^*| = |E|$, and by Steiner's inequality (cf.~\cite{maggi:2012}*{Theorem~14.4}), $\Per(E^*,\Omega) \leq \Per(E,\Omega)$. Furthermore, the $t=1$ trace of~$E^*$ is contained in that of~$E$. Therefore, if $E_\varepsilon$~is a minimiser of~$F_\varepsilon$, we may assume without loss of generality that $E_\varepsilon$~is equal (up to sets of measure zero) to the hypograph of its density function $u_\varepsilon \in BV(\omega)$.

    For this type of set, we may compute
    \begin{equation}\label{eq:Fu}
        F_\varepsilon(E_\varepsilon) = \int_{\{0<u_\varepsilon<1\}} \sqrt{ 1 + \varepsilon^2 \abs{ \nabla u_\varepsilon }^2 } \diff x + \int_{\{u_\varepsilon=1\}} \sigma_1(x) \diff x .
    \end{equation}
    Since $|E_\varepsilon| = |E_0|$, we have
    \[
        \int_{E_0} u_\varepsilon \diff x + \int_{\omega \setminus E_0} u_\varepsilon \diff x = \int_{E_0} (1-u_\varepsilon) \diff x ,
    \]
    and so
    \[
        \norm{ u_\varepsilon - \chi_{E_0} }_{L^1(\omega)} = \int_{\omega \setminus E_0} u_\varepsilon \diff x + \int_{E_0} (1-u_\varepsilon) \diff x = 2 \int_{\omega \setminus E_0} u_\varepsilon \diff x .
    \]
    If we recall $\sigma_1 = \frac12 \chi_{\omega \setminus E_0}$, from~\eqref{eq:Fu} we deduce
    \begin{equation}\label{ineq:Fu-bound}
        F_\varepsilon(E_\varepsilon) \geq \varepsilon \abs{ \nabla u_\varepsilon }(\omega) + \frac14 \norm{ u_\varepsilon - \chi_{E_0} }_{L^1(\omega)} .
    \end{equation}

    Now, we localise the last inequality to a small ball $B \coloneq B_\varepsilon(x)$ centred at a point $x \in \partial E_0$. For this, let $\mean{ v }$ denote the average of a function~$v$ over~$B$. By the triangle and Poincaré inequalities,
    \begin{align*}
        \norm{ \chi_{E_0} - \mean{ \chi_{E_0} } }_{L^1(B)} & \leq \norm{ \chi_{E_0} - u_\varepsilon }_{L^1(B)} + \norm{ u_\varepsilon - \mean{ u_\varepsilon } }_{L^1(B)} + \norm{ \mean{ u_\varepsilon } - \mean{ \chi_{E_0} } }_{L^1(B)} \\
                                                           & \leq 2 \norm{ u_\varepsilon - \chi_{E_0} }_{L^1(B)} + C\varepsilon \abs{ \nabla u_\varepsilon }(B) ,
    \end{align*}
    for some $C > 0$ independent of~$\varepsilon$. But by the density assumption on~$\partial E_0$,
    \[
        \norm{ \chi_{E_0} - \mean{ \chi_{E_0} } }_{L^1(B)} = 2 \frac{ |B \cap E_0| |B \setminus E_0| }{ |B| } \geq c \varepsilon^d
    \]
    for some constant $c > 0$ independent of $x$ and~$\varepsilon$. Hence, since the Minkowski dimension of~$\partial E_0$ is~$\alpha$, we can consider $N \sim \varepsilon^{-\alpha}$ such disjoint balls (see Definition~\ref{def:minkowski}), and then it follows from~\eqref{ineq:Fu-bound} that
    \[
        F_\varepsilon(E_\varepsilon) \geq cN \varepsilon^d \sim \varepsilon^{d-\alpha} ,
    \]
    as we were to prove.

    We now show the upper bound. Consider the standard mollifications $u_\varepsilon \coloneq \chi_{E_0} * \varrho_\varepsilon$, and let $E_\varepsilon \coloneq \{ (x,t) \in \omega \times (0,1) : u_\varepsilon(x) \geq t \}$ be the hypograph of $u_\varepsilon$. Then, by the properties of mollifications, the volume $|E_\varepsilon| = a$ remains constant, so we may use formula~\eqref{eq:Fu}.

    Now, since $\{ u_\varepsilon = 1 \}$ is strictly contained in~$E_0$, the second integral in~\eqref{eq:Fu} vanishes. On the other hand, we have the uniform bound
    \[
        \abs{ \nabla u_\varepsilon(x) } \leq \int_{E_0}{ \abs{ \nabla\varrho_\varepsilon(x-y) } }\diff y \leq \frac{ C }{ \varepsilon } , \quad x \in \omega ,
    \]
    while $\nabla u_\varepsilon$ is supported on the $\varepsilon$-neighbourhood $B_\varepsilon(\partial E_0)$. Therefore, by Definition~\ref{def:minkowski} of Minkowski dimension,
    \[
        F_\varepsilon(E_\varepsilon) \leq C |B_\varepsilon(\partial E_0)| \sim \varepsilon^{d-\alpha} . \qedhere
    \]
\end{proof}

\section{Phase-field approximation and numerical results}\label{sec:numerics}

In this section, we construct a phase-field approximation of the dimensionally reduced functional~\eqref{eq:Gamma-expansion} derived above, validate it against the level-set filling-up rule of Proposition~\ref{prop:E0}, and use it to compute capillary force curves under normal motion of the plates.

\subsection{Phase-field model}

The liquid region is described by a smooth function $u \colon \omega \subset \R^d \to \R$ that distinguishes the two phases: $u(x) = 0$ indicates the vapour phase, $u(x) = 1$ the liquid phase, and $u(x) \in (0,1)$ their diffuse interface. The wetted set is recovered \emph{a posteriori} as the level set
\[
    E_0 \coloneq \bigl\{ x \in \omega : u(x) > \tfrac12 \bigr\} .
\]

Throughout this section we assume, for simplicity, that both plates consist of the same homogeneous material. In terms of the surface tensions appearing in Gauss's energy~\eqref{eq:explicit-energy}, the relative adhesion coefficients of the two plates introduced in Section~\ref{sec:setting} then coincide and are spatially constant,
\[
    \sigma_0 = \sigma_1 \equiv \sigma \coloneq \frac{ \gamma_\mathrm{ls} - \gamma_\mathrm{sv} }{ \gamma_\mathrm{lv} } ,
\]
with $-1 < \sigma < 1$ by the wetting condition. Thus, the combined adhesion coefficient is $\overline\sigma = \sigma_0 + \sigma_1 = 2\sigma$, the interpolation of Theorem~\ref{thm:Gammalim1} reduces to $\sigma(x,t) = (2t-1)\sigma$, and the dimensionally reduced functional of Corollary~\ref{cor:Ge} takes the form
\[
    G_\varepsilon(E) = \int_E 2\sigma \diff x + \varepsilon \kappa_\sigma \int_{\partial^*\!E\cap \omega} g(x) \dH^{d-1}(x) ,
\]
with $\kappa_\sigma \coloneq \int_0^1 \sqrt{1-(2t-1)^2\sigma^2} \diff t = \frac12 \bigl( \sqrt{1-\sigma^2} + \frac{\arcsin\sigma}{\sigma} \bigr)$, extended by continuity to~$\kappa_0 = 1$ and decreasing to~$\pi/4$ as~$\abs{\sigma} \to 1$. Following Modica--Mortola~\cites{modica-mortola:1977, modica:1987a}, we approximate $G_\varepsilon$ by its phase-field version
\begin{equation}\label{eq:phase-field-approx-energy}
    I_{\delta,\varepsilon}(u) = 2\sigma \int_\omega u(x) \diff x + \varepsilon \kappa_\sigma \int_\omega g(x) p_\delta(u) \diff x ,
\end{equation}
where
\[
    p_\delta(u) \coloneq \frac{1}{c_0} \bigl( \delta \abs{ \nabla u }^2 + \delta^{-1}
    W(u) \bigr)
\]
is the Modica--Mortola Lagrangian with interface width $\delta > 0$, $W(u) = u^2 (u - 1)^2$ is a double-well penalty, and $c_0 = 2 \int_0^1 \sqrt{W(u)} \diff u$ is a normalising prefactor. The constraint on the total volume now reads
\begin{equation}\label{eq:phase-field-approx-volume}
    V(u) \coloneq \int_\omega u(x)g(x) \diff x = a .
\end{equation}

The bulk term~$2\sigma u$ of~$I_{\delta,\varepsilon}$ reproduces the wetting energy $\int_E \overline\sigma$ of the dimensionally reduced functional~$G_\varepsilon$ derived in Corollary~\ref{cor:Ge}, and the diffuse-interface term~$p_\delta(u)$ approximates the weighted perimeter. Following~\cite{owen-sternberg:1991}, $I_{\delta,\varepsilon} \stackrel{\Gamma}{\to} G_\varepsilon$ in the $L^1$ topology as~$\delta \to 0$, so minimisers of~$I_{\delta,\varepsilon}$ approximate minimisers of the reduced problem.

The $\Gamma$-equivalence of $G_\varepsilon$ and~$F_\varepsilon$ guarantees only that the approximation of the \emph{energy} improves to order~$o(\varepsilon)$; no result guarantees that minimisers of~$G_\varepsilon$ are closer to minimisers of~$F_\varepsilon$ than minimisers of~$F^0$ are. Since the interface regularisation term~$\varepsilon F^1$ is present in~$F_\varepsilon$, we nevertheless expect that including it yields a better approximation of the minimisers than computing with~$F^0$ alone.

\subsection{Numerical setup}\label{subsec:numerical-setup}

\begin{figure}
    \includegraphics[width=1\linewidth]{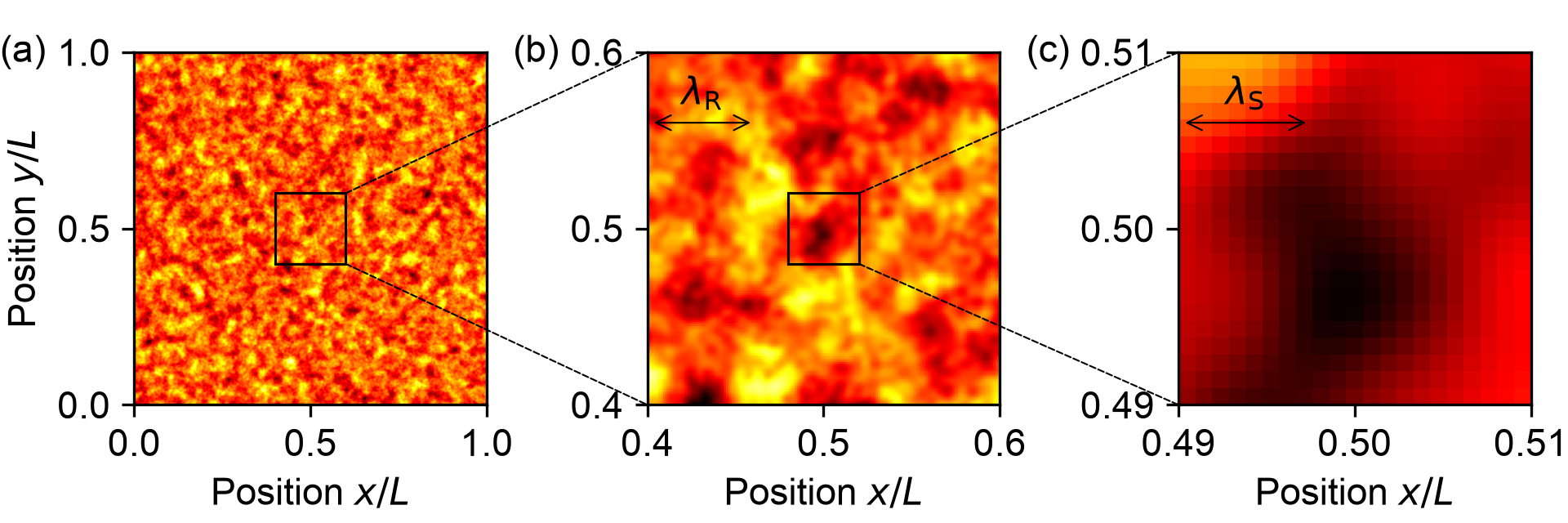}
    \caption{\textbf{Gap profile.} The plot shows the gap formed by two parallel rough surfaces, computer-generated by Fourier filtering as described in Subsection~\ref{subsec:numerical-setup}. The most prominent peaks and valleys have a length scale comparable to the roll-off wavelength $\lambda_\mathrm{R}$, while the gap is smooth below the short cut-off wavelength $\lambda_\mathrm{S}$; the two lengths are marked in panels (b) and (c), respectively. The black squares in panels (a) and (b) indicate the areas shown in panels (b) and (c), respectively.}\label{fig:roughness-setup}
\end{figure}

We discretise both the phase field~$u$ and the plate profiles $h_0,h_1$ on a regular, periodic two-dimensional grid~$\omega$ of $N_x = N_y = 1024$ points with P1 (piecewise linear) finite elements. The lateral domain size is~$L=1$, so the grid spacing is $l = L / N_x$.

We generate the two rough surfaces $h_0,h_1 \colon \omega \to \R$ (lower and upper plate) following~\cite{jacobs2017quantitative}. We start from an isotropic power spectral density~$C^\mathrm{iso}$ of the height (the squared amplitude of its Fourier modes as a function of the wavevector magnitude~$q$), which is constant for wavelengths between the long cut-off $\lambda_\mathrm{L} = L$ and the roll-off $\lambda_\mathrm{R} = 64l$, decays below $\lambda_\mathrm{R}$ as the power law $C_0 q^{-2-2H}$ with Hurst exponent $H=0.8$ and prefactor $C_0=10L^{2-2H}$, and vanishes below the short cut-off wavelength $\lambda_\mathrm{S}=8l$. Imposing random phase angles drawn from the uniform distribution, we perform an inverse Fourier transform to obtain the height profiles in real space. The mean of the surface height profiles $h_0$ and~$h_1$ is zero, the standard deviation is approximately~$0.03L$, and the maximal deviation is approximately~$0.21L$.

Following the setting of Section~\ref{sec:setting}, we fix the plate-separation parameter $\varepsilon = 10^{-3}$: the plates are the graphs of $\varepsilon h_0$ and~$\varepsilon z + \varepsilon h_1$, so the physical gap is~$\varepsilon g(x)$, with rescaled gap $g(x) = z + h_1(x) - h_0(x)$ and mean separation $z \in [0.5L, 1.5L]$ (see Figure~\ref{fig:roughness-setup} for an illustration of the rescaled gap at maximal mean separation). The physical liquid volume is correspondingly~$\varepsilon a$. We set the liquid volume so that $a = \frac14 L^3$ and the interface width to $\delta = 2l$. By the Young--Dupré relation, the constant adhesion coefficient corresponds to a spatially uniform contact angle~$\theta$ through $\sigma = -\cos\theta$. Below we report results for contact angles between $\theta = 5^\circ$ and~$\theta = 175^\circ$.

We run quasi-static simulations in which one plate is held fixed while the other is displaced normally to mimic approach and retraction: at each separation the phase field from the converged solution of the previous step relaxes to a minimiser before the plates move again. Since the volume constraint~\eqref{eq:phase-field-approx-volume} is linear in~$u$, we enforce it by projection: a projected L-BFGS method~\cite{Nocedal2006-gn} keeps every iterate on the plane~$V(u) = a$ and restricts the search directions to its tangent plane, replacing the gradient by $\nabla I_{\delta,\varepsilon} - \lambda \nabla V$ with the closed-form multiplier $\lambda = \scalar{ \nabla V }{ \nabla I_{\delta,\varepsilon} } / \norm{\nabla V}^2$. At convergence the projected gradient vanishes, so the minimiser satisfies the Euler--Lagrange equation $\delta I_{\delta,\varepsilon}/\delta u = \lambda g$ with the Lagrange multiplier~$\lambda$ of the volume constraint.

The separation follows a prescribed sequence~$\{z_i\}_i$ satisfying $0.5L \le z_i \le 1.5L$ with step size $\abs{ z_{i+1} - z_i } = 0.01L$. The sequence starts at the minimal separation $z_1 = 0.5L$. We initialise the first step with a phase field in which the liquid is condensed into a central square of side length $\frac{\sqrt 2}{2} L$, with vapour elsewhere. At the initial separation this square matches the prescribed liquid volume, as~$\bigl(\tfrac{\sqrt2}{2}\bigr)^2 L^2 \cdot \tfrac12 L = \tfrac14 L^3 = a$.

\subsection{Morphology of the wetted region}

\begin{figure}
    \includegraphics[width=1\linewidth]{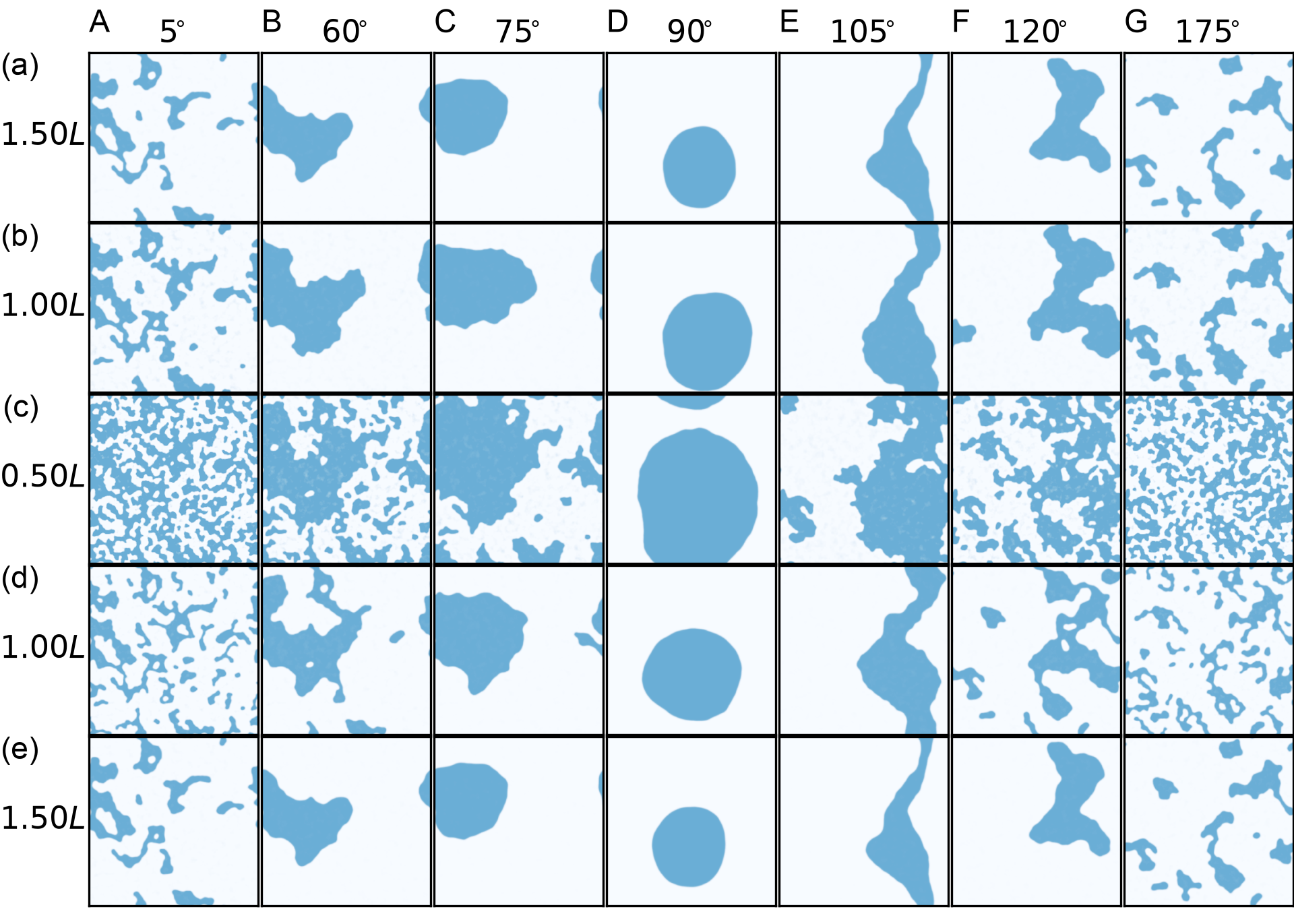}
    \caption{\textbf{Last round trip.} The figure shows the evolution of minimisers of the phase field obtained in the last round trip. Panels (a) to~(c) correspond to the approach of the moving plate, and panels (c) to~(e) to the retraction. The mean separations are $z = 1.5L$ in (a) and~(e), $z = 1.0L$ in (b) and~(d), and $z = 0.5L$ in~(c). Because the volume is kept constant, the ratio of the liquid volume to the total available volume is $16.7\%$ in (a) and~(e), $25\%$ in (b) and~(d), and $50\%$ in~(c). The columns correspond to different contact angles, $\theta = 5^\circ$ in column~A, $60^\circ$ in~B, $75^\circ$ in~C, $90^\circ$ in~D, $105^\circ$ in~E, $120^\circ$ in~F and $175^\circ$ in~G. At the same mean separation and contact angle, the phase field differs between approach and retraction (compare rows (b) and~(d)): the configuration is hysteretic.}\label{fig:comparison}
\end{figure}

After an initial retraction from $z = 0.5L$ to the maximal separation $z = 1.5L$, the moving (upper) plate approaches the fixed (lower) plate over $100$~discrete steps until the minimal separation $z = 0.5L$, and then retracts; this round trip is repeated $10$~times to remove the influence of the initial configuration. Figure~\ref{fig:comparison} shows the converged phase field of the last round trip for a range of mean separations and contact angles.

For the nearly perfectly hydrophilic ($\theta = 5^\circ$) and nearly perfectly hydrophobic ($\theta = 175^\circ$) cases the morphology is governed by the surface topography and the liquid breaks into many small droplets, whereas at the neutral angle ($\theta = 90^\circ$) it forms a single large droplet. This is the phase-field signature of the analysis of Section~\ref{sec:setting}: the energy~\eqref{eq:phase-field-approx-energy} splits into a bulk wetting term, weighted by~$-\cos\theta$, and a perimeter term. At the extreme contact angles (columns A and~G in Figure~\ref{fig:comparison}) the wetting term dominates and the liquid follows the gap, recovering the level-set filling of Proposition~\ref{prop:E0}; at the neutral angle the wetting weight vanishes and the configuration is governed entirely by the perimeter term~$F^1$ of Theorem~\ref{thm:Gammalim1}, so the liquid coalesces into a single droplet (column~D).

We examine the extreme contact angles more closely in Figure~\ref{fig:affinity-difference}, overlaying the phase field on the level set. At the smallest separation, panel~(a) of Figure~\ref{fig:affinity-difference}, the simulated minimiser agrees closely with the islands obtained from the filling-up rule of Proposition~\ref{prop:E0}, capturing all but the smallest islands of the level-set. The residual discrepancy is the first-order effect predicted by Theorem~\ref{thm:Gammalim1}: the perimeter term smooths the rough island boundaries, rounding the droplets into more spherical shapes and merging nearby islands.

\begin{figure}
    \includegraphics[width=1\linewidth]{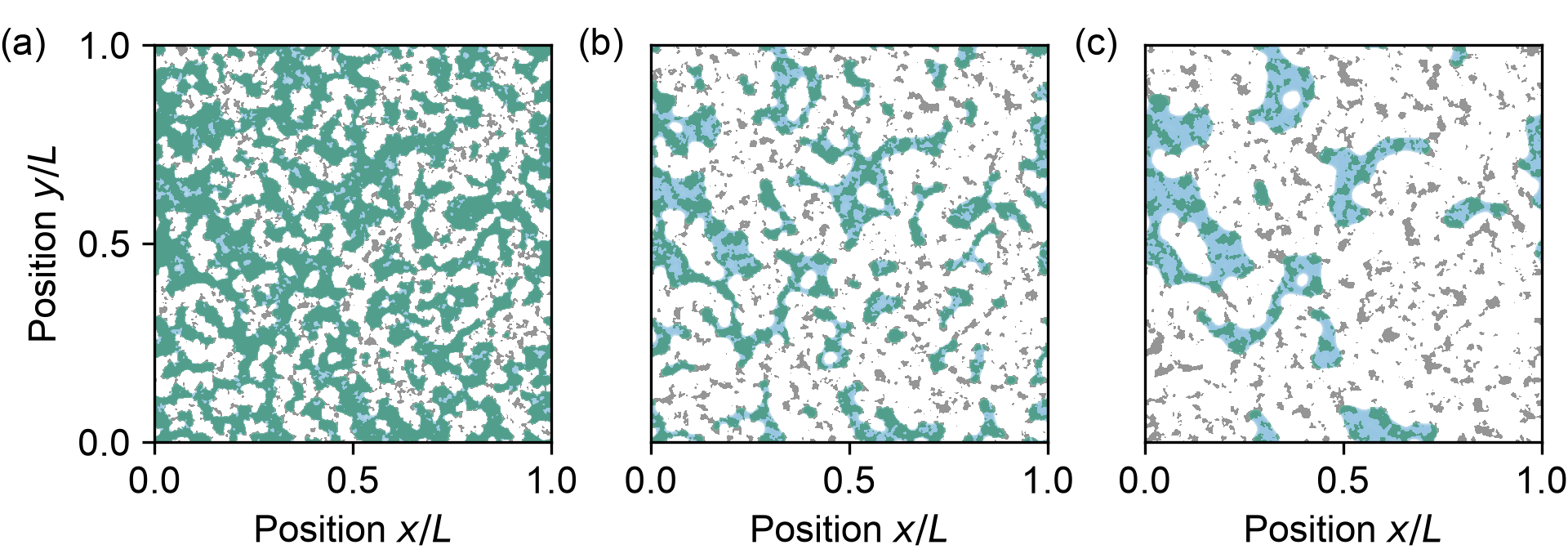}
    \caption{\textbf{Simulation vs.~level-set capillary condensation.} The figure shows the distribution of the condensed liquid (the phase field) for nearly perfectly hydrophilic cases (contact angle $\theta=5^\circ$) at different mean separations, $z = 0.5L$ in~(a), $z = 1.0L$ in~(b) and $z = 1.5L$ in~(c). Superimposed on the minimiser of the phase field (blue) is the level set (grey). Overlapping regions are emphasised (green). The overlapping regions are larger at smaller mean separations.}\label{fig:affinity-difference}
\end{figure}

At larger separations, panels (b) and~(c), the minimiser deviates from the level set. It still covers islands identified by the level set, but only selectively, with bridges of liquid between these islands. At the largest separation, panel~(c), even the bridges of liquid are smoothed to be more spherical, indicating that the perimeter contribution~$F^1$ becomes more prominent relative to the bulk wetting term at larger separation.

\subsection{Capillary forces}

\begin{figure}
    \includegraphics[width=1\linewidth]{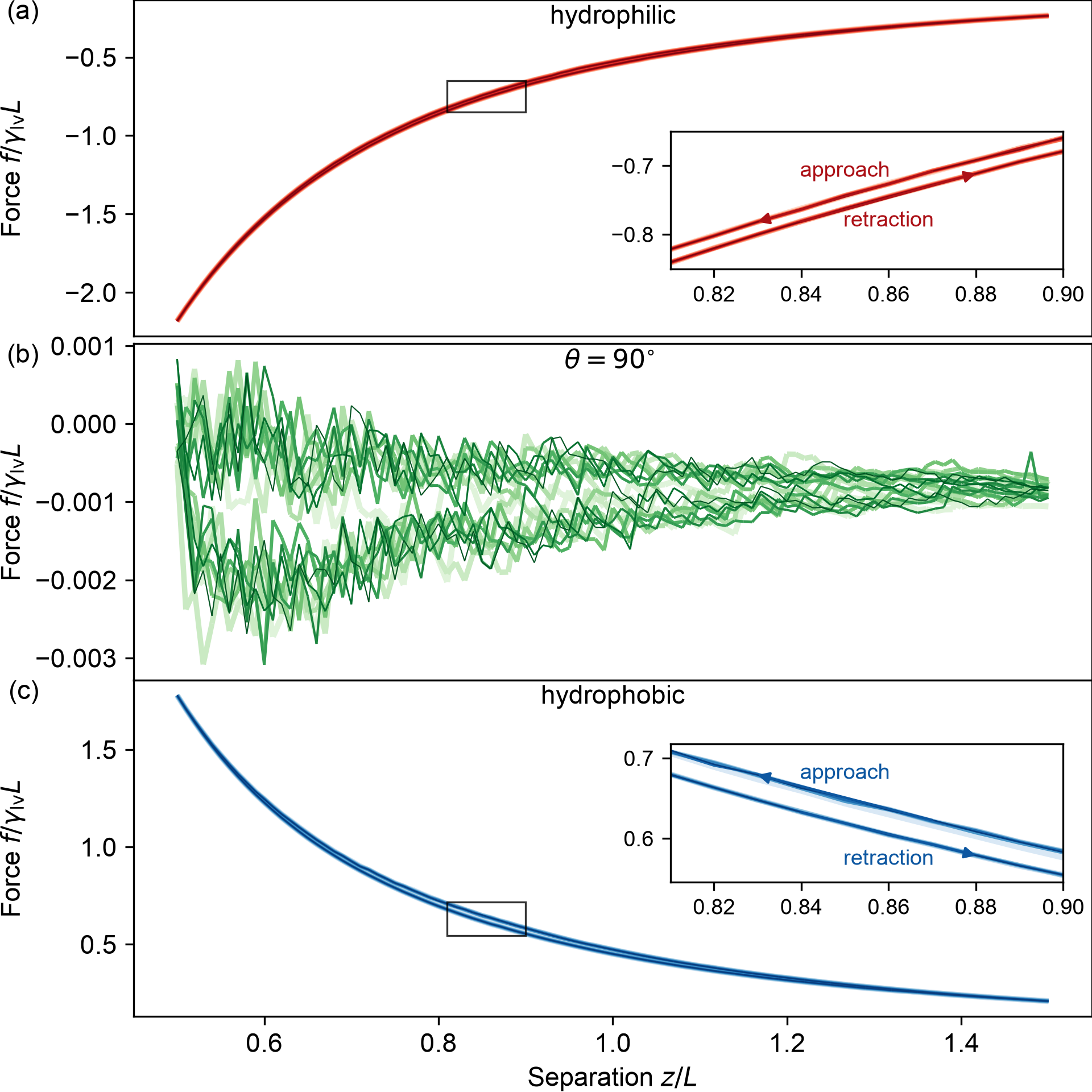}
    \caption{\textbf{Capillary force.} The plot shows the capillary forces computed from the phase field at different separations. The simulations follow the quasi-static approach--retraction protocol of Subsection~\ref{subsec:numerical-setup}. Each curve shows $10$~approach and retraction cycles. Panel~(a) shows data for hydrophilic interfaces ($\theta = 5^\circ$), panel~(b) shows $\theta = 90^\circ$ where the bulk wetting term disappears, and~(c) shows hydrophobic interfaces ($\theta = 175^\circ$). The insets in panels (a) and~(c) highlight the hysteresis.}\label{fig:capillary-force}
\end{figure}

Finally, we compute the macroscopic capillary force at each separation (Figure~\ref{fig:capillary-force}) as
\begin{equation}\label{eq:phase-field-force}
    f = \int_\omega \lambda u - \varepsilon \kappa_\sigma p_\delta(u) \diff x .
\end{equation}
Here $\lambda$ denotes the Lagrange multiplier associated with the volume constraint~\eqref{eq:phase-field-approx-volume} at the given separation, obtained in closed form from the converged solution as described in Subsection~\ref{subsec:numerical-setup}. It plays the role that the threshold~$\lambda^*$ of Proposition~\ref{prop:E0} plays for the limit functional~$F^0$. The expression~\eqref{eq:phase-field-force} is the phase-field counterpart of the classical sharp-interface capillary force $\lambda \abs{E_0} - \varepsilon \kappa_\sigma \Per(E_0, \omega)$, which arises by formally differentiating the reduced energy~$G_\varepsilon$ with respect to the mean separation at fixed volume. Its two terms are the pressure force acting on the wetted region and the line-tension force acting on its boundary. This derivation is formal: the $\Gamma$-convergence results of Section~\ref{sec:setting} control the energies themselves, not their derivatives with respect to the separation. Forces are reported in units of the liquid-vapour surface tension~$\gamma_\mathrm{lv}$, see equation~\eqref{eq:explicit-energy}.

For hydrophilic interfaces the force is attractive and pulls the plates together, for hydrophobic interfaces it is repulsive and pushes them apart, and at the neutral angle its magnitude is about three orders of magnitude smaller than in these two cases, since only the perimeter term then contributes to the energy. For the hydrophilic interface (panel~(a) of Figure~\ref{fig:capillary-force}) the force grows as the plates approach, exceeding $2\,\gamma_\mathrm{lv} L$ at the smallest separation, and decays monotonically as the liquid bridges thin on retraction; the hydrophobic interface (panel~(c)) is its near mirror image, repulsive and of comparable magnitude.

The neutral case shown in panel~(b) is qualitatively different. Here, at~$\sigma = 0$, the bulk wetting term vanishes, so the force is not an $O(1)$~effect but the first-order, perimeter-only contribution~$\varepsilon F^1$. Its magnitude is thus smaller by roughly a factor~$\varepsilon$. Without the bulk driving force, i.e., with~$\theta=90^\circ$, the liquid is organised entirely by the weighted perimeter and coalesces into the single compact droplet of Figure~\ref{fig:comparison}, column~D\@. The associated energy landscape is flat at leading order and corrugated only by the roughness of the weighted perimeter, so it admits many near-degenerate metastable configurations. As the plates move, the droplet pins and depins on the surface asperities: the contact line locks onto an asperity until the driving force exceeds the local energy barrier, then jumps to the next metastable state. These jumps produce the irregular, avalanche-like behaviour visible in panel~(b). In panels (a) and~(c) these irregularities are not visible because the force is three orders of magnitude larger.

\begin{figure}
    \includegraphics[width=1\linewidth]{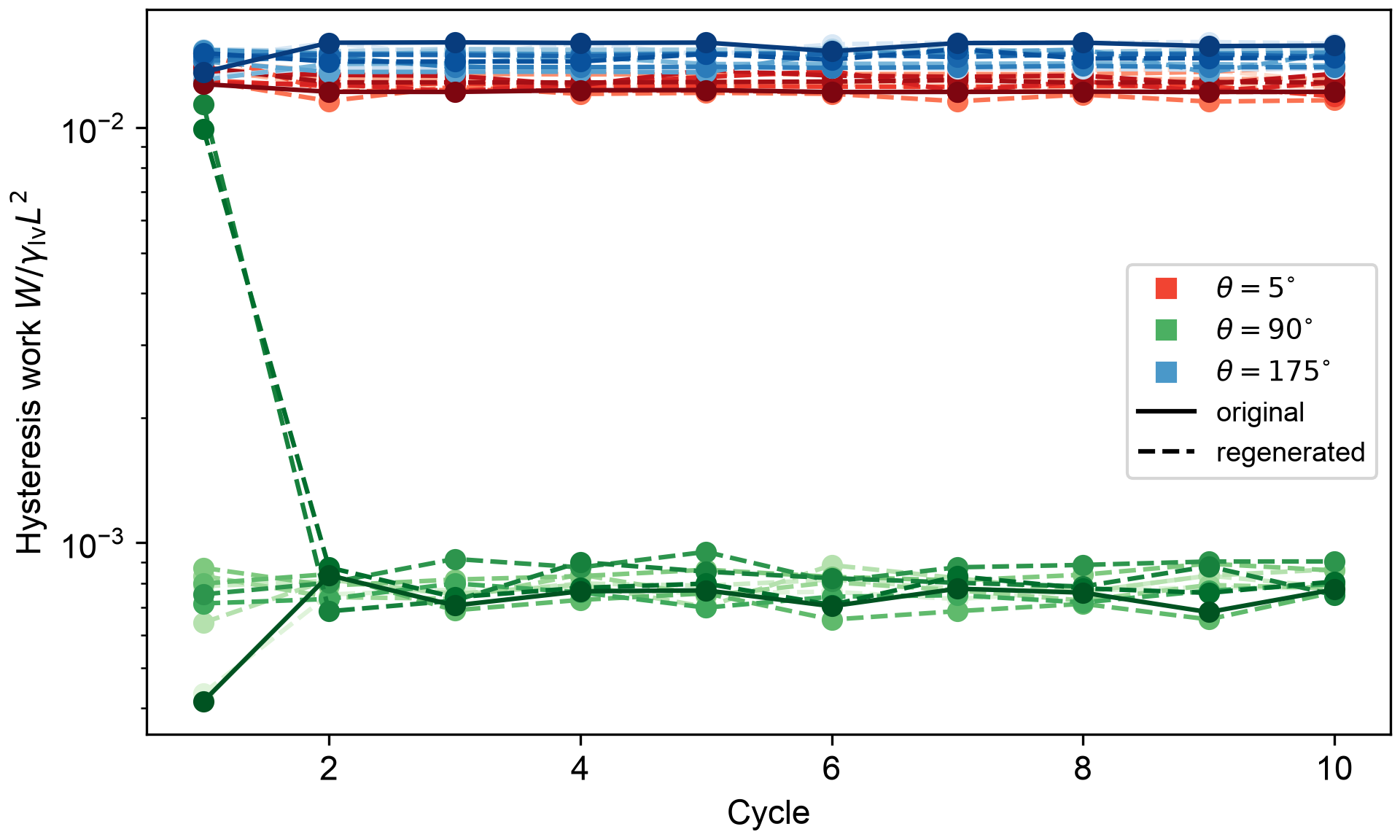}
    \caption{\textbf{Hysteresis work.} The plot shows the hysteresis work computed by numerically integrating the capillary forces over the separation trajectory in Figure~\ref{fig:capillary-force} for each approach--retraction cycle. This is repeated another 10~times, for 10~pairs of computer-generated rough surfaces with different random seeds. The results from regenerated surfaces are plotted with paler colours and dashed lines.}\label{fig:hysteresis}
\end{figure}

Pinning and depinning leads to hysteresis. This is directly visible in the force--separation curves, which enclose a hysteresis loop whose area measures the work dissipated per cycle, which is clearly nonzero even for the extreme contact angles where individual pinning events are overshadowed by the total adhesive force. The hysteresis is hence narrow relative to the overall force scale and smooth in the wetting- and drying-dominated regimes. This behaviour degenerates into an irregular, pinning-dominated band at the neutral angle $\theta=90^\circ$.

In absolute terms, the dissipated work at the neutral angle is more than an order of magnitude below that at the extreme angles (Figure~\ref{fig:hysteresis}). We attribute this to the amount of interface taking part in the depinning events. Each event releases an energy of order~$\varepsilon$ proportional to the length of contact line involved, and the liquid at the neutral angle forms a single compact droplet with little interface, while at the extreme angles it fragments into many islands whose total contact line is far longer.

The simulation data confirm this picture. Figure~\ref{fig:hysteresis-vs-perimeter} shows the work of hysteresis against the total contact-line length at the middle separation, separately for the approach and the retraction halves of each cycle. The contact line of the single droplet at the neutral angle matches the value $2\sqrt{\pi a/z} \approx 1.8L$ for a circular droplet, while the fragmented configurations at the extreme angles carry roughly an order of magnitude more contact line. The dissipated work per unit contact line varies by only a factor of about three across the three contact angles, while the work itself spans more than an order of magnitude. The contact line is itself hysteretic. At the same separation, the retraction states carry roughly one and a half times the contact line of the approach states.

\begin{figure}
    \includegraphics[width=1\linewidth]{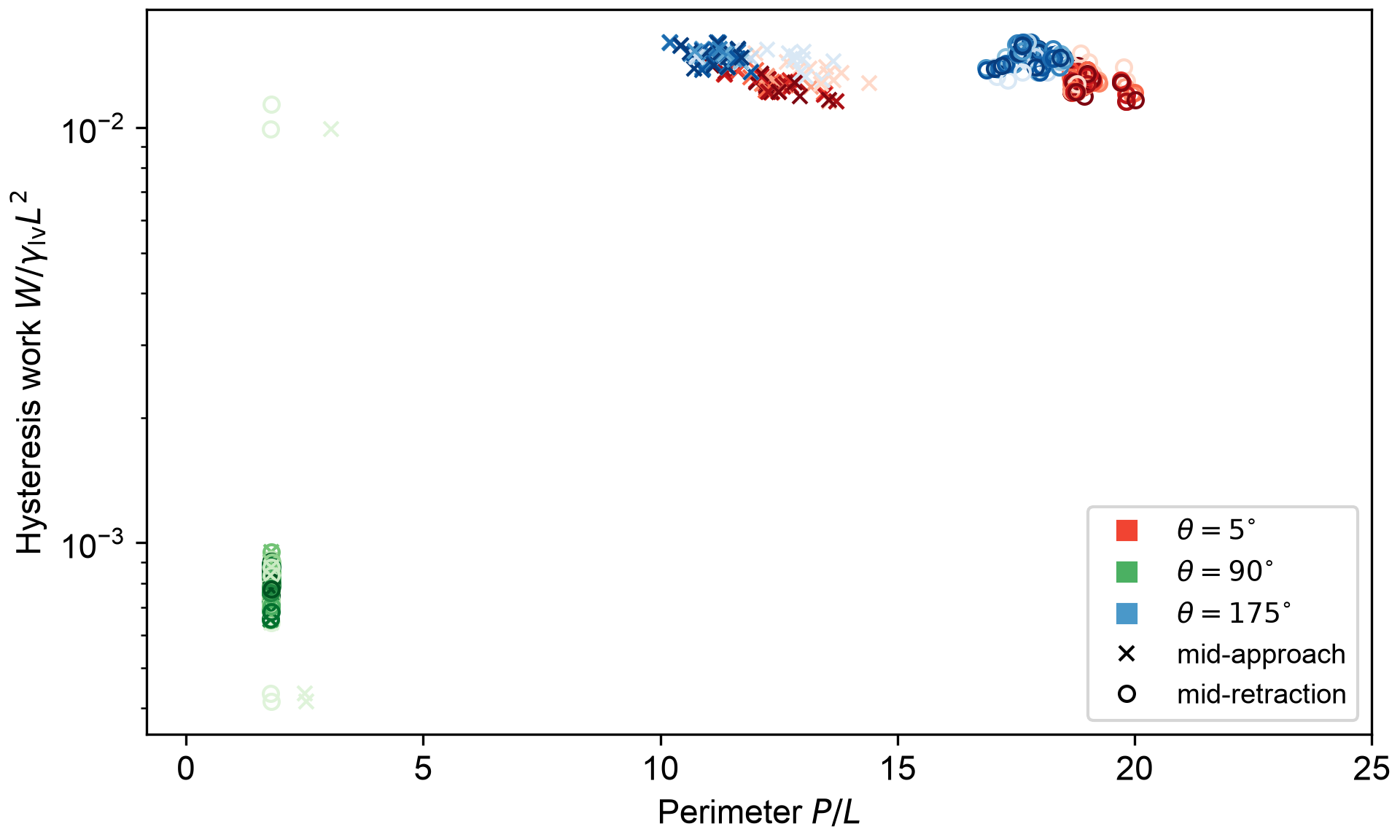}
    \caption{\textbf{Hysteresis work against perimeter.} The plot shows the hysteresis work from Figure~\ref{fig:hysteresis} against the total perimeter of all contact lines. For each approach--retraction cycle, the perimeter at the middle separation ($z=1.0L$) is plotted for both the approach half and the retraction half. Each cycle therefore appears twice, at the same value of the hysteresis work. This also includes the data points from the same regenerated surfaces as in Figure~\ref{fig:hysteresis}.}\label{fig:hysteresis-vs-perimeter}
\end{figure}

The remaining variation of the work per unit contact line is systematic. It increases from the neutral to the hydrophilic to the hydrophobic case. We interpret this ordering as that of the energy released per depinning event. It is smallest at the neutral angle, where the events release no bulk wetting energy, and it grows with the local gap~$g$ at the contact line.

While the energy~\eqref{eq:phase-field-approx-energy} is symmetric between the hydrophilic and the hydrophobic case up to a sign flip of~$\sigma$ and an exchange of liquid and vapour, the hysteresis reveals an asymmetry. The asymmetry arises because the phase field occupies different regions of the gap: the smallest gaps in the hydrophilic case ($\theta<90^\circ$) and the largest gaps in the hydrophobic case ($\theta>90^\circ$). The hydrophobic simulations therefore sample an effective gap larger than that in the hydrophilic simulations. We speculate that the following mechanism produces the offset between the two extreme angles: the barrier for nucleating or annihilating an island scales quadratically with the local line tension~$\varepsilon\kappa_\sigma g$ and inversely with the bulk wetting force, which is weaker for larger gaps. If the energy released per event follows the barrier scale, the moderate difference between the sampled gap levels is amplified nonlinearly.

The data are consistent with this speculation. The contact-line lengths of the hydrophilic and hydrophobic configurations are comparable, with the hydrophilic ones slightly longer, yet the hydrophobic case dissipates more per unit contact line by a factor of approximately~$1.5$ (Figure~\ref{fig:hysteresis-vs-perimeter}). The asymmetry therefore does not lie in the amount of interface but in the energy scale of the individual depinning events. A quantitative analysis of the statistics of these events is beyond the scope of this work and will be the subject of a follow-up study.

\section*{Acknowledgements}

This work was funded by the Deutsche Forschungsgemeinschaft (DFG) within the priority programme SPP~2256, project~523956128. P.~Dondl and L.~Sciaraffia are grateful for inspiring discussions with M.~Novaga (Pisa). L.~Pastewka and Y.~Wang acknowledge fruitful discussions with T.~B.~D.~Jacobs (Pittsburgh) and M.~Ladecký (Freiburg).

\appendix

\section{The notion of \texorpdfstring{$\Gamma$}{Γ}-convergence}

Although the general notion of $\Gamma$-convergence can be developed on any Hausdorff topological space, we use a definition that is suitable only for spaces satisfying the first axiom of countability. Metric spaces fall into this category.

\begin{definition}[$\Gamma$-convergence, cf.~\cite{dalmaso:1993}*{Proposition~8.1}]\label{def:Gamma-lim}
    Let $X$ be a metric space, and let $F_\varepsilon \colon X \to \overline\R$ be a family of functionals, $\varepsilon > 0$. We say that $\{ F_\varepsilon \}_{\varepsilon>0}$ $\Gamma$-converges to $F \colon X \to \overline\R$ as $\varepsilon \downto 0$ if the following conditions are satisfied:
    \begin{itemize}
        \item for every $x \in X$ and for every sequence $\{ x_\varepsilon \}_{\varepsilon>0}$ converging to~$x$
            \[
                F(x) \leq \liminf_{\varepsilon \downto 0} F_\varepsilon(x_\varepsilon) ;
            \]
        \item for every $x \in X$ there exists a sequence $\{ x_\varepsilon \}_{\varepsilon>0}$ converging to~$x$ such that
            \[
                F(x) = \lim_{\varepsilon \downto 0} F_\varepsilon(x_\varepsilon) .
            \]
    \end{itemize}
    We write $F = \Gammalim_{\varepsilon\downto0} F_\varepsilon$.
\end{definition}

\begin{theorem}[Convergence of minimisers I, cf.~\cite{dalmaso:1993}*{Corollary~7.20}]\label{thm:Gamma-min1}
    Assume that $\{ F_\varepsilon \}_{\varepsilon>0}$ $\Gamma$-converges to~$F$ in a metric space~$X$, and let $x_\varepsilon \in X$ be a minimiser of~$F_\varepsilon$ for every $\varepsilon > 0$. If $x$ is a limit point of $\{ x_\varepsilon \}_{\varepsilon>0}$, then $x$ is a minimiser of $F$ in~$X$, with
    \[
        F(x) = \limsup_{\varepsilon \downto 0} F_\varepsilon(x_\varepsilon) .
    \]
    Moreover, if $x = \lim_{\varepsilon \downto 0} x_\varepsilon$, then
    \[
        F(x) = \lim_{\varepsilon \downto 0} F_\varepsilon(x_\varepsilon) .
    \]
\end{theorem}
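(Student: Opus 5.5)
The plan is the standard two-part argument for this classical fact. Let $m = \inf_X F$ and suppose $x_{\varepsilon_k} \to x$ along some subsequence $\varepsilon_k \downto 0$. The first part compares $F(x)$ with every competitor. The $\liminf$ inequality of Definition~\ref{def:Gamma-lim} (it is stated for full families, but it applies equally to subsequences: extend $\{x_{\varepsilon_k}\}$ to a family converging to $x$, e.g.\ by setting $x_\varepsilon = x$ off the subsequence) gives
\[
F(x) \le \liminf_{k\to\infty} F_{\varepsilon_k}(x_{\varepsilon_k}).
\]
Next fix any $y \in X$ and take a recovery sequence $y_\varepsilon \to y$ with $F_\varepsilon(y_\varepsilon) \to F(y)$. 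Minimality of $x_\varepsilon$ gives $F_\varepsilon(x_\varepsilon) \le F_\varepsilon(y_\varepsilon)$ for every $\varepsilon$. Hence
\[
F(x) \le \liminf_k F_{\varepsilon_k}(x_{\varepsilon_k}) \le \limsup_{\varepsilon \downto 0} F_\varepsilon(x_\varepsilon) \le \lim_{\varepsilon} F_\varepsilon(y_\varepsilon) = F(y).
\]
Since $y$ is arbitrary, $x$ minimises $F$.

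The second part is the energy identity. Take the infimum over $y$ in the last chain: this gives $\limsup_\varepsilon F_\varepsilon(x_\varepsilon) \le m = F(x)$. Combined with the first part we get $F(x) \le \liminf_k F_{\varepsilon_k}(x_{\varepsilon_k}) \le \limsup_\varepsilon F_\varepsilon(x_\varepsilon) \le F(x)$, so $F(x) = \limsup_{\varepsilon} F_\varepsilon(x_\varepsilon)$.

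If in addition $x = \lim_\varepsilon x_\varepsilon$ along the whole family, apply the $\liminf$ inequality to the full family. This gives $F(x) \le \liminf_\varepsilon F_\varepsilon(x_\varepsilon) \le \limsup_\varepsilon F_\varepsilon(x_\varepsilon) \le F(x)$, hence the limit exists and equals $F(x)$.

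The only delicate point is the subsequence extension of the $\liminf$ inequality. In a metric space this is harmless: given $x_{\varepsilon_k} \to x$, define the family by $x_\varepsilon = x_{\varepsilon_k}$ when $\varepsilon = \varepsilon_k$ and $x_\varepsilon = x$ otherwise. The $\liminf$ over the whole family is then at most the $\liminf$ along $\varepsilon_k$, so the Definition~\ref{def:Gamma-lim} inequality for the family yields the bound along the subsequence. One should also note the values may be $\pm\infty$; the inequalities remain valid in $\overline\R$. If $F \equiv +\infty$, every point is trivially a minimiser, and the chain still gives $\limsup_\varepsilon F_\varepsilon(x_\varepsilon) = +\infty = F(x)$.
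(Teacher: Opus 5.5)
Your argument is correct. It is the standard proof of this result: the paper gives no proof of its own and cites Dal~Maso, Corollary~7.20. That proof combines the $\liminf$ inequality along the convergent subsequence with the bound $\limsup_{\varepsilon} \min F_\varepsilon \le \inf F$ coming from recovery sequences, exactly as you do. Your device of extending $\{x_{\varepsilon_k}\}$ to a full family by setting $x_\varepsilon = x$ off the subsequence (after passing to a strictly decreasing $\varepsilon_k$) correctly transfers the family-indexed $\liminf$ inequality of Definition~\ref{def:Gamma-lim} to subsequences.
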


Let $\{ F_\varepsilon \}_{\varepsilon>0}$ be a family of functionals in~$X$ such that
\[
    F^0 \coloneq \Gammalim_{\varepsilon \downto 0} F_\varepsilon , \qquad m_0 \coloneq \min_{x \in X} F^0(x) < \infty ,
\]
and set $X^0 \coloneq \argmin F^0$.

\begin{definition}[$\Gamma$-expansion, cf.~\cite{anzellotti-baldo:1993}*{Definition~1.3}]\label{def:Gamma-exp}
    We say that the first-order asymptotic development
    \[
        F_\varepsilon = F^0 + \varepsilon F^1 + o(\varepsilon)
    \]
    holds, if
    \[
        \Gammalim_{\varepsilon \downto 0} \frac{ F_\varepsilon - m_0 }{ \varepsilon } = F^1 \quad \text{in} \quad X^0 .
    \]
\end{definition}

It is possible to show that $F^1(x) = \infty$ for all $x \in X \setminus X^0$. In this sense, $F^1$ is \emph{concentrated} on the minimisers of the zeroth-order limit~$F^0$.

As with the (zeroth-order) $\Gamma$-limit, the existence of a development in higher order terms gives more information about the asymptotic behaviour of minimisers.

\begin{theorem}[Convergence of minimisers II, cf.~\cite{anzellotti-baldo:1993}*{Theorem~1.2}]\label{thm:Gamma-min2}
    Suppose the family $\{ F_\varepsilon \}_{\varepsilon>0}$ has a first-order asymptotic development, and let $\{ x_\varepsilon \}_{\varepsilon>0}$ be a family of respective minimisers converging to $x \in X$. Then, $x \in X^0$ and it minimises $F^1$ in $X^0$. Moreover, if $m_\varepsilon$ is the infimum of $F_\varepsilon$ on $X$ and $m_1$ is the infimum of $F^1$ on $X^0$, it holds
    \[
        m_\varepsilon = m_0 + \varepsilon m_1 + o(\varepsilon) .
    \]
\end{theorem}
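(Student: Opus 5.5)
The plan is to reduce everything to the zeroth-order statement, Theorem~\ref{thm:Gamma-min1}, and then to redo the elementary argument behind that theorem for the rescaled functionals
\[
    G_\varepsilon \coloneq \frac{ F_\varepsilon - m_0 }{ \varepsilon } .
\]
Throughout, I read Definition~\ref{def:Gamma-exp} in the usual Anzellotti--Baldo way. That is, the $\liminf$ inequality for $G_\varepsilon$ holds along every sequence converging to a point of~$X^0$, and every point of~$X^0$ admits a recovery sequence for $G_\varepsilon$ taken in~$X$.

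\emph{Step 1.} Since $x_\varepsilon \to x$ and each $x_\varepsilon$ minimises~$F_\varepsilon$, Theorem~\ref{thm:Gamma-min1} gives that $x$ minimises~$F^0$, so $x \in X^0$. It also gives $F_\varepsilon(x_\varepsilon) = m_\varepsilon \to m_0$.

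\emph{Step 2.} The map $F_\varepsilon \mapsto G_\varepsilon$ is a positive affine rescaling, so $x_\varepsilon$ also minimises~$G_\varepsilon$ on~$X$. Fix any $y \in X^0$ and let $y_\varepsilon \to y$ be a recovery sequence, so that $G_\varepsilon(y_\varepsilon) \to F^1(y)$. Because $x \in X^0$, the $\liminf$ inequality applies along $x_\varepsilon \to x$, and minimality of $x_\varepsilon$ gives the chain
\[
    F^1(x) \leq \liminf_{\varepsilon \downto 0} G_\varepsilon(x_\varepsilon) \leq \limsup_{\varepsilon \downto 0} G_\varepsilon(x_\varepsilon) \leq \lim_{\varepsilon \downto 0} G_\varepsilon(y_\varepsilon) = F^1(y) .
\]
Taking the infimum over $y \in X^0$ yields $F^1(x) \leq m_1$. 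Since $x \in X^0$, we also have $m_1 \leq F^1(x)$, so $x$ minimises~$F^1$ on~$X^0$ and $F^1(x) = m_1$. The chain then collapses to $\lim_{\varepsilon \downto 0} G_\varepsilon(x_\varepsilon) = m_1$.

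\emph{Step 3.} Unwinding the definition of $G_\varepsilon$ gives
\[
    m_\varepsilon = F_\varepsilon(x_\varepsilon) = m_0 + \varepsilon G_\varepsilon(x_\varepsilon) = m_0 + \varepsilon m_1 + o(\varepsilon) ,
\]
which is the claimed expansion.

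The point needing care is bookkeeping rather than analysis. First, the rescaled $\Gamma$-limit is only asserted on~$X^0$, which is why Step~1 must come first: it places $x$ in $X^0$ before the $\liminf$ inequality is invoked. Second, the recovery sequences for points of $X^0$ must be allowed to live in all of~$X$, so that minimality of $x_\varepsilon$ over $X$ can be compared against them. Third, $m_1$ must be finite for the $o(\varepsilon)$ statement to be meaningful. The value $-\infty$ is excluded because $m_1 = F^1(x) \geq \ldots > -\infty$, since $F^1(x)$ is a value of the limit functional and the chain above bounds it below by the $\liminf$. The value $+\infty$ is excluded under the implicit standing assumption that $F^1$ is finite somewhere on~$X^0$, and I would state that assumption explicitly in the proof.
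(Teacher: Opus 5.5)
Your argument is correct and is the standard one. The paper gives no proof of this statement and only cites Anzellotti--Baldo, whose proof consists of your two steps. First, Theorem~\ref{thm:Gamma-min1} places $x$ in $X^0$. Second, the fundamental property of $\Gamma$-convergence is applied to $(F_\varepsilon - m_0)/\varepsilon$, using the $\liminf$ inequality at points of $X^0$ and recovery sequences taken in all of~$X$. Your chain in fact shows $(m_\varepsilon - m_0)/\varepsilon \to m_1 = F^1(x)$ in $\overline\R$, with no finiteness hypothesis on $m_1$.

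The one thing to correct is the reason you give in your last paragraph for $m_1 > -\infty$. The inequality $F^1(x) \leq \liminf_{\varepsilon \downto 0} G_\varepsilon(x_\varepsilon)$ bounds the $\liminf$ from below by $F^1(x)$. It gives no lower bound on $F^1(x)$ itself, so nothing in the argument excludes $-\infty$.

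For instance, take a one-point space $X = \{p\}$ with $F_\varepsilon(p) = -\sqrt\varepsilon$. Then $F^0 \equiv 0$, $m_0 = 0$ and $X^0 = X$. The rescaled functionals $(F_\varepsilon - m_0)/\varepsilon \equiv -\varepsilon^{-1/2}$ have $\Gamma$-limit $F^1 \equiv -\infty$. The development therefore holds in the sense of Definition~\ref{def:Gamma-exp}, yet $m_\varepsilon = m_0 + \varepsilon m_1 + o(\varepsilon)$ is meaningless.

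So $m_1 \in \R$ must be taken as a hypothesis on both sides, as the $o(\varepsilon)$ in the statement tacitly requires; it cannot be derived. In the paper's application this is harmless: $F^1$ is a perimeter with a positive weight, so $m_1 \geq 0$ and the value $-\infty$ cannot occur.
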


Finally, we provide the companion notion of $\Gamma$-equivalence.

\begin{definition}[$\Gamma$-equivalence, cf.~\cite{braides-truskinovsky:2008}*{Definition~4.2}]\label{def:Gamma-equiv}
    Let $\{ F_\varepsilon \}_{\varepsilon>0}$ and $\{ G_\varepsilon \}_{\varepsilon>0}$ be two families of functionals on a metric space~$X$. We say that they are equivalent at order~$\varepsilon^\beta$ ($\beta>0$) if there exist $m_\varepsilon \in \R$ such that for all sequences $\varepsilon_k \downto 0$ for which the limits exist we have
    \[
        \Gammalim_{k \to \infty} \frac{ F_{\varepsilon_k} - m_{\varepsilon_k} }{ \varepsilon_k^\beta } = \Gammalim_{k \to \infty} \frac{ G_{\varepsilon_k} - m_{\varepsilon_k} }{ \varepsilon_k^\beta }
    \]
    and the limits are nontrivial (i.e.~they do not take the value~$-\infty$ and are not identically~$\infty$).
\end{definition}

\section{The Minkowski dimension}

Although there are many equivalent definitions of the Minkowski dimension found in the literature, we provide the two that are of use to us in the construction of Theorem~\ref{thm:irreg-mins}.

\begin{definition}[Minkowski dimension, cf.~\cite{falconer:2014}*{Definitions~2.1, Proposition~2.4}]\label{def:minkowski}
    Let $X \subset \R^d$. For $\delta > 0$, let $N_\delta(X)$ be the largest number of disjoint balls~$B_\delta(x)$ with centres~$x \in X$. Then, the \emph{Minkowski dimension} of~$X$ is given by
    \[
        \alpha \coloneq \lim_{\delta \downto 0} \frac{ \log N_\delta(X) }{ - \log\delta } ,
    \]
    provided this limit exists. Equivalently, if~$B_\delta(X)$ denotes the set of points at distance less than $\delta$ from~$X$, then~$\alpha$ can be computed as
    \[
        \alpha = d - \lim_{\delta \downto 0} \frac{\log |B_\delta(X)| }{ \log \delta } .
    \]
\end{definition}

\end{document}